\newtheorem{theorem}{Theorem}[section]
\newtheorem{lemma}[theorem]{Lemma}
\newtheorem{corollary}[theorem]{Corollary}
\newtheorem*{corollary*}{Corollary}
\theoremstyle{definition}
\newtheorem{definition}[theorem]{Definition}
\theoremstyle{remark}
\newtheorem{remark}[theorem]{Remark}
\numberwithin{equation}{section}
\title[Comparison principle for the CMAF]{A general comparison principle for the pluripotential complex Monge-Amp\`ere flow}
\author{Bowoo Kang}
\address{Department of Mathematical Sciences, KAIST, 291 Daehak-ro, Yuseong-gu, Daejeon
34141, South Korea}
\email{bou704@kaist.ac.kr}
\begin{document}
\begin{abstract}
    We prove a comparison principle for the pluripotential complex Monge-Amp\`ere flows for the right-hand side of the form $dt \wedge d\mu$ where $d\mu$ is dominated by a Monge-Amp\`ere measure of a bounded plurisubharmonic function. As a consequence, we obtain the uniqueness of the weak solution to the pluripotential Cauchy-Dirichlet problem. We also study the long-term behavior of the solution under some assumption.
\end{abstract}
\maketitle
\section{Introduction}
    \indent In this paper, we aim at extending parabolic pluripotential theory introduced by Guedj, Lu and Zeriahi in \cite{GLZ21-2}. Pluripotential theory for the complex Monge-Amp\`ere equation has been well developed, starting from the works by Bedford and Taylor \cite{BT76, BT82}. Guedj, Lu and Zeriahi \cite{GLZ20, GLZ21-2} successfully developed the parabolic pluripotential theory and studied the weak solutions for the pluripotential complex Monge-Amp\`ere flows. They proved the existence and uniqueness of the solution to the Cauchy-Dirichlet problem when the measure on the right-hand side is given a positive $L^p$ density function for some $p > 1$. We considerably extend their results by studying the Cauchy-Dirichlet problem for general measures.\\
    \indent We recall the setup of \cite{GLZ21-2}, which will be used throughout this paper. Let $\Omega \Subset \mathbb{C}^n$ be a bounded strictly pseudoconvex domain. From now on, we fix $0 < T < +\infty$ and denote $\Omega_T = (0, T) \times \Omega$. We say that a function $u : \Omega_T \rightarrow [-\infty, +\infty)$ is locally uniformly Lipschitz in $(0, T)$ if for any compact $J \Subset (0, T)$, there exists $ \kappa_J > 0$ satisfying
    \begin{align*}
        u(t, z) \leq u(s, z) + \kappa_J\lvert t-s\rvert \text{ for all $t, s \in J$ and for all $z \in \Omega$.}
    \end{align*}
    Next, we say that a function $u : \Omega_T \rightarrow \mathbb{R}$ is locally uniformly semi-concave in $(0, T)$ if for any compact $J \Subset (0, T)$, there exists $C_J > 0$ such that
    \begin{align*}
        t \mapsto u(t, z)-C_Jt^2 \text{ is concave in $J$ for all $z \in \Omega$.}
    \end{align*}
    The family of \textit{parabolic potentials} $\mathcal{P}(\Omega_T)$ is the set of functions $u$ such that
    \begin{itemize}
        \item for any $t \in (0, T)$, $u(t, \cdot) \in PSH(\Omega)$,
        \item $u$ is locally uniformly Lipschitz in $(0, T)$. 
    \end{itemize}
    The \textit{Cauchy-Dirichlet boundary data} $h$ is a function defined on $\partial_0\Omega_T := ([0, T) \times \partial \Omega) \cup (\{0\} \times \Omega)$ satisfying
    \begin{itemize}
        \item the restriction of $h$ on $[0, T) \times \partial \Omega$ is continuous,
        \item the family $\{h(\cdot, z)~:~ z \in \partial\Omega\}$ is locally uniformly Lipschitz in $(0, T)$,
        \item for any $\zeta \in \partial \Omega$,
        \begin{align*}
            h_0 := h(0, \cdot) \in PSH(\Omega) \cap L^{\infty}(\Omega) \text{ and } \lim_{\Omega \ni z \rightarrow \zeta}h(0, z) = h(0, \zeta).
        \end{align*}
    \end{itemize}
    Let $F(t, z, r) : [0, T) \times \Omega \times \mathbb{R} \rightarrow \mathbb{R}$ be a continuous function which is increasing in $r$, bounded in $[0, T) \times \Omega \times J$ for each $J \Subset \mathbb{R}$ and $(t, r) \mapsto F(t, \cdot, r)$ is locally uniformly Lipschitz and semi-convex. Let $\mu$ be a non-negative Borel measure on $\Omega$ such that $PSH(\Omega) \subset L^1_{loc}(\Omega, \mu)$.  \\
    \indent We consider the following complex Monge-Amp\`ere flows
    \begin{align}\label{degenerate cmaf}
        dt \wedge (dd^cu)^n = e^{\partial_tu+F(t, z, u)}dt \wedge d\mu \text{ in } \Omega_T,
    \end{align}
    where $u \in \mathcal{P}(\Omega_T) \cap L^{\infty}(\Omega_T)$ is an unknown function. Here, $dt \wedge (dd^cu)^n$ is understood in the pluripotential sense as defined in \cite[Definition 2.2]{GLZ21-2}. The Cauchy-Dirichlet problem for the (\ref{degenerate cmaf}) with the Cauchy-Dirichlet boundary data $h$ is finding $u \in \mathcal{P}(\Omega_T) \cap L^{\infty}(\Omega_T)$ such that $u$ solves (\ref{degenerate cmaf}) and
    \begin{align}\label{Cauchy-Dirichlet boundary condition}
        \begin{cases}
            &\lim_{(t, z) \rightarrow (\tau, \zeta)}u(t, z) = h(\tau, \zeta) \text{ for all } (\tau, \zeta) \in [0, T) \times \partial \Omega, \\
        &\lim_{t \rightarrow 0+}u(t, \cdot) = h_0 \text{ in } L^1(\Omega, d\mu).
        \end{cases}
    \end{align}
    Let $dV$ be a Lebesgue measure on $\mathbb{C}^n$. Guedj, Lu and Zeriahi \cite{GLZ21-2} proved the existence and uniqueness of the solution to the Cauchy-Dirichlet problem with a measure $\mu$ and a boundary data $h$ satisfying
    \begin{align*}
        d\mu = gdV \text{ for some } g \in L^p(\Omega), ~p > 1 \text{ with } g > 0 \text{ a.e.},
    \end{align*}
    and there exists $C > 0$ such that 
    \begin{align}\label{additional assumption}
        t\lvert \partial_th(t, z)\rvert \leq C \text{ for all } (t, z) \in (0, T) \times \partial \Omega,
    \end{align}
    and
    \begin{align}\label{additional assumption 2}
        t^2\partial_{tt}h(t, z) \leq C \text{ in the sense of distributions in } (0, T) \text{ for all } z \in \Omega.
    \end{align}
    We focus on their uniqueness result. The authors first proved the following comparison principle and then used this to prove the uniqueness of the solution which is locally uniformly semi-concave in $(0, T)$.
    \begin{theorem}{\cite[Theorem 6.6]{GLZ21-2}}\label{previous comparison principle by GLZ}
    Let $d\mu = gdV$ for some $g \in L^p(\Omega)$ with $p > 1$ and $g > 0$ a.e. with respect to the Lebesgue measure on $\Omega$. Let $u, v \in \mathcal{P}(\Omega_T) \cap L^{\infty}(\Omega_T)$ with the Cauchy-Dirichlet boundary data $h_{1}$, $h_{2}$ as in (\ref{Cauchy-Dirichlet boundary condition}). Assume that
    \begin{enumerate}
        \item [(a)] $v$ is locally uniformly semi-concave in $t \in (0, T)$,
        \item [(b)] $dt \wedge (dd^cu)^n \geq e^{\partial_tu + F(t, z, u)}dt \wedge d\mu$ in $\Omega_T$,
        \item [(c)] $dt \wedge (dd^cv)^n \leq e^{\partial_tv + F(t, z, v)}dt \wedge d\mu$ in $\Omega_T$,
        \item [(d)] $h_1$ satisfies (\ref{additional assumption}) and (\ref{additional assumption 2}).  
    \end{enumerate}
    If $h_1 \leq h_2$, then $u \leq v$.
\end{theorem}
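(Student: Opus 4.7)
The overall plan is to argue by contradiction by (i) turning the weak subsolution $u$ into a strict subsolution via a small time-linear perturbation, (ii) showing that the positivity set of $u - v$ must be pushed away from the parabolic boundary, and (iii) applying a parabolic Bedford--Taylor / Stokes comparison on this set, then closing the loop with the exponential nonlinearity.

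\textbf{Step 1: strict subsolution by perturbation.} Replace $u$ by $u_\epsilon(t,z) := u(t,z) - \epsilon t$ for small $\epsilon > 0$. The Monge-Amp\`ere mass does not change, while $\partial_t u_\epsilon = \partial_t u - \epsilon$ and $F(t,z,u_\epsilon) \leq F(t,z,u) + C\epsilon t$ by the local Lipschitz hypothesis on $F$ in $r$. Hence (b) upgrades to a strict inequality
\begin{equation*}
    dt \wedge (dd^c u_\epsilon)^n \geq e^{\epsilon/2}\, e^{\partial_t u_\epsilon + F(t,z,u_\epsilon)}\, dt \wedge d\mu
\end{equation*}
on all of $\Omega_T$ once $\epsilon$ is small enough. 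It suffices to prove $u_\epsilon \leq v$ for every such $\epsilon$ and let $\epsilon \to 0^+$.

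\textbf{Step 2: controlling the parabolic boundary.} On $[0,T)\times \partial\Omega$ the Dirichlet condition together with $h_1 \leq h_2$ immediately gives $\limsup (u_\epsilon - v) \leq 0$. The delicate boundary is $\{0\}\times \Omega$, where the convergence is only in $L^1(\Omega,d\mu)$. This is exactly the place where hypotheses (\ref{additional assumption}) and (\ref{additional assumption 2}) are invoked: combined with the Lipschitz behaviour of $u$ and the semi-concavity of $v$ in $t$, they allow one to extract a uniform upper bound on $\limsup_{t\to 0^+} (u_\epsilon - v)$ so that, up to shrinking $\epsilon$, the set $U_\eta := \{u_\epsilon > v + \eta\}$ is relatively compact in $\Omega_T$ for every $\eta > 0$.

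\textbf{Step 3: parabolic Stokes on the contact set.} Suppose for contradiction that some $U_\eta$ is non-empty. Since $\max(u_\epsilon, v+\eta)$ equals $u_\epsilon$ on $U_\eta$ and $v+\eta$ on $\partial U_\eta$, the parabolic current framework from \cite[Def.~2.2]{GLZ21-2} yields a Stokes identity whose end result is the comparison
\begin{equation*}
    \int_{U_\eta} dt \wedge (dd^c u_\epsilon)^n \;\leq\; \int_{U_\eta} dt \wedge (dd^c v)^n.
\end{equation*}
(Approximate $\mathbf{1}_{U_\eta}$ by suitable cut-offs and use weak-$*$ continuity of the Monge-Amp\`ere operator along locally bounded psh functions; the hypothesis $PSH(\Omega)\subset L^1_{\mathrm{loc}}(\Omega,\mu)$ guarantees all integrals are finite.) Feeding (b) and (c) into the two sides turns this into an exponential inequality for the densities against $dt \wedge d\mu$ on $U_\eta$.

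\textbf{Step 4: closing the contradiction.} On $U_\eta$ one has $u_\epsilon > v + \eta$, hence $F(t,z,u_\epsilon) \geq F(t,z,v)$ by monotonicity in $r$. For the time derivatives, I compare slice by slice: because $v$ is locally uniformly semi-concave in $t$ while $u$ is locally Lipschitz, the difference $u_\epsilon - v$ is locally uniformly semi-convex in $t$, so wherever it is positive and attains an interior maximum in $t$ its right-derivative is $\geq 0$; a standard slicing argument upgrades this to $\int \partial_t u_\epsilon \geq \int \partial_t v$ against the MA measure on each slice of $U_\eta$. Combining Step 3 with the strict gain $e^{\epsilon/2}$ from Step 1 produces
\begin{equation*}
    e^{\epsilon/2} \int_{U_\eta} e^{\partial_t v + F(t,z,v)}\, dt\wedge d\mu \;\leq\; \int_{U_\eta} e^{\partial_t v + F(t,z,v)}\, dt\wedge d\mu,
\end{equation*}
forcing $U_\eta$ to have zero $dt\wedge d\mu$-measure. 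Since $e^{\partial_t v + F(t,z,v)}$ is bounded below on any parabolic compact, this actually gives $U_\eta=\emptyset$; letting $\eta\to 0^+$ yields $u_\epsilon \leq v$.

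\textbf{Main obstacle.} The genuinely hard step is Step 2, the initial-time boundary. Because convergence to $h_0$ is only $L^1(\Omega,d\mu)$, pointwise arguments fail, and one needs precisely the quantitative assumptions (\ref{additional assumption})--(\ref{additional assumption 2}) on $h_1$, paired with the semi-concavity (a) of $v$, to get uniform control of $u - v$ as $t\to 0^+$. The parabolic Stokes step is technically nontrivial as well but is essentially bookkeeping given the current framework of \cite{GLZ21-2}.
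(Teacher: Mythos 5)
The statement you are proving is cited from \cite[Theorem 6.6]{GLZ21-2}; this paper does not reprove it but, in Remark \ref{remark 1.3}, it explains that the original proof proceeds by replacing $u$ with an upper envelope of subsolutions, using (\ref{additional assumption 2}) (via \cite[Theorem 4.8]{GLZ21-2}) to show that envelope is locally uniformly semi-concave, and using the $L^p$-density hypothesis to get joint continuity of the envelope and of the supersolution. Your proposal is a direct subsolution-vs-supersolution argument and does not pass through the envelope; this is closer in spirit to the present paper's Theorem \ref{theorem 3.8}, but there the hypotheses are tightened so that \emph{both} $u$ and $v$ are locally uniformly semi-concave.

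That tightening points to the genuine gap in your Step~4. You assert that $u_\epsilon - v$ is locally uniformly semi-convex in $t$. Under hypothesis (a) only $v$ is semi-concave in $t$, so $-v$ is semi-convex; but $u \in \mathcal{P}(\Omega_T)$ is merely locally uniformly Lipschitz in $t$, and Lipschitz plus semi-convex need not be semi-convex. Without second-order one-sided control on $u$ in $t$, the interior-maximum observation does not yield one-sided derivative inequalities at all. Even granting semi-concavity of $u$, the ``standard slicing argument'' is doing the real work and is left entirely unarticulated: what is actually needed is the quantitative mechanism used in Lemma \ref{Lemma 3.3} of this paper, namely setting $t_1 = \inf\{t : \mu(E_t) > 0\}$, verifying $t_1 > 0$ by an initial-boundary estimate, and then exploiting $\mu(E_{t-\delta}) = 0$ for $t$ near $t_1$ to get a backward difference-quotient inequality $\frac{u(t,z)-u(t-\delta,z)}{\delta} > \frac{v(t,z)-v(t-\delta,z)}{\delta}+\varepsilon$ on $E_t$, which combined with semi-concavity and Lipschitzness of $\partial_t u$ finally gives a pointwise $\partial_t u > \partial_t^- v + \varepsilon/2$ on $E_t$. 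Saying the inequality holds ``at interior maxima'' and ``a standard slicing argument upgrades this'' hides exactly this step.

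A smaller but real issue: your Step~1 perturbation $u_\epsilon = u - \epsilon t$ gains strictness in the density but contributes nothing to the regularity of $\partial_t u$, which is needed to run the time-derivative comparison. The paper's proof of its Theorem \ref{theorem 3.8} uses the time rescaling $W^s(t,z) = s^{-1}u(st,z) - C|s-1|(t+1)$ followed by mollification in $s$ precisely to manufacture a $\Phi^\varepsilon$ with $\partial_t\Phi^\varepsilon$ locally Lipschitz in $t$ (assumption (f) of Lemma \ref{Lemma 3.3}); the additive perturbation cannot deliver this. Finally, your Step~2 invokes (\ref{additional assumption}) and (\ref{additional assumption 2}) to ``push the positivity set away from the parabolic boundary'' without saying how; in the GLZ proof (\ref{additional assumption 2}) is used inside the envelope construction, not for boundary localization, and the actual $t=0$ control in this paper is the quantitative Lemma \ref{Lemma 3.7}/\ref{Lemma 3.6} estimate, which is more than a qualitative $\limsup$ bound.
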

    Our main result generalizes this comparison principle for a measure $\mu$ such that there exists $\varphi \in PSH(\Omega) \cap L^{\infty}(\Omega)$ satisfying
    \begin{align}\label{MA}
        \begin{cases}
            &(dd^c\varphi)^n \geq d\mu \text{ in } \Omega, \\
            &\lim_{z \rightarrow \partial \Omega}\varphi(z) = 0.
        \end{cases}
    \end{align}
    Thanks to the Ko{\l}odziej's subsolution theorem, it is equivalent to the existence of $\psi \in PSH(\Omega) \cap L^{\infty}(\Omega)$ satisfying
    \begin{align}\label{MA2}
        \begin{cases}
            &(dd^c\psi)^n = d\mu \text{ in } \Omega, \\
            &\lim_{z \rightarrow \partial \Omega}\psi(z) = 0.
        \end{cases}
    \end{align}
    From now, we fix such $\mu$, $\varphi$ and $\psi$ throughout this paper.
\begin{theorem}\label{main result 1}
        Let $u, v \in \mathcal{P}(\Omega_T) \cap L^{\infty}(\Omega_T)$ with the Cauchy-Dirichlet boundary data $h_1$, $h_2$ as in (\ref{Cauchy-Dirichlet boundary condition}). Assume that
    \begin{enumerate}
        \item [(a)] $u$ and $v$ are locally uniformly semi-concave in $(0, T)$,
        \item [(b)] $dt \wedge (dd^cu)^n \geq e^{\partial_tu+F(t, z, u)}dt \wedge d\mu$ in $\Omega_T$,
        \item [(c)] $dt \wedge (dd^cv)^n \leq e^{\partial_tv+F(t, z, v)}dt \wedge d\mu$ in $\Omega_T$,
        \item [(d)] $h_1$ satisfies (\ref{additional assumption}).
    \end{enumerate}
    If $h_1 \leq h_2$, then $u \leq v$.
\end{theorem}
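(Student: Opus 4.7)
The plan is to adapt the proof strategy of Theorem~\ref{previous comparison principle by GLZ} from \cite{GLZ21-2}, replacing its use of an $L^p$ density (which enters there through Ko{\l}odziej-type stability estimates for the elliptic Monge-Amp\`ere equation) by direct use of the bounded plurisubharmonic subsolution $\psi$ provided by (\ref{MA2}). The extra assumption that $u$ is locally uniformly semi-concave in $t$, not present in Theorem~\ref{previous comparison principle by GLZ}, will compensate for dropping hypothesis~(\ref{additional assumption 2}) on $h_1$: it gives a direct almost-everywhere upper bound on $\partial_t u$, whereas in \cite{GLZ21-2} the corresponding bound came from (\ref{additional assumption 2}) via a maximum-principle argument tracing back to the boundary data.

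First I would perform a standard perturbation to reduce to strict inequality. For small parameters $\varepsilon, \delta > 0$, set
\[
    \tilde u(t, z) := u(t, z) + \varepsilon \psi(z) - \delta t - \eta(t),
\]
where $\eta(t) \geq 0$ is a time-dependent corrector chosen, using hypothesis~(\ref{additional assumption}), so that $\limsup_{(t,z) \to \partial_0\Omega_T}(\tilde u - v)(t,z) \leq 0$. Since $\psi \le 0$ and $\tilde u \le u$, the mixed Monge-Amp\`ere inequality combined with (b) yields
\[
    (dd^c\tilde u)^n \geq (dd^c u)^n + \varepsilon^n (dd^c\psi)^n \geq e^{\partial_t u + F(t,z,u)}\, d\mu + \varepsilon^n\, d\mu,
\]
so, using monotonicity of $F$ in $r$ and $\partial_t \tilde u = \partial_t u - \delta$, the function $\tilde u$ becomes a \emph{strict} subsolution of the flow, gaining an additive factor proportional to $\varepsilon^n\, d\mu$. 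Arguing by contradiction, assume $\sup_{\Omega_T}(\tilde u - v) > 0$ and fix $0 < c < \sup(\tilde u - v)$. On each time slice $t$, apply the Bedford-Taylor elliptic comparison principle to the plurisubharmonic functions $\tilde u(t,\cdot)$ and $v(t,\cdot)+c$ on the open set $E_c^t := \{\tilde u(t,\cdot) > v(t,\cdot)+c\}$; the subsolution/supersolution inequalities, monotonicity of $F$ in $r$, and the almost-everywhere upper bounds on $\partial_t u$ and $\partial_t v$ produced by the semi-concavity of $u$ and $v$ should then yield a slice-wise integral inequality against $d\mu$ which, after integration in $t$, becomes one of the form
\[
    (1+c_\varepsilon)\int_{E_c} e^{\partial_t v+F(t,z,v)}\, dt\wedge d\mu \leq \int_{E_c} e^{\partial_t v+F(t,z,v)}\, dt\wedge d\mu.
\]
This forces $E_c$ to be empty; letting $c \to 0^+$ and then $\delta, \varepsilon, \eta \to 0$ in the appropriate order yields $u \le v$.

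The main obstacle is the passage from a slice-wise elliptic comparison to a genuine parabolic integral inequality on $E_c$. Classical Bedford-Taylor comparison requires boundary control $\tilde u(t,\cdot) \geq v(t,\cdot)+c$ on $\partial E_c^t$, which is only weakly available because $u$ and $v$ are merely plurisubharmonic on each slice; moreover, extracting usable pointwise relations between $\partial_t u$ and $\partial_t v$ from semi-concavity requires an approximation by mollification in $t$, and one must verify that the slice-wise Monge-Amp\`ere measures are stable under this approximation in a way compatible with $d\mu$. Finally, since $\mu$ may be highly singular (as general as permitted by (\ref{MA})), every estimate must be run intrinsically against $d\mu$, precluding any appeal to stability of the Monge-Amp\`ere equation in an $L^p$ norm with respect to Lebesgue measure; the barrier $\psi$ must play the role previously played by the $L^p$ density throughout.
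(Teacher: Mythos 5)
Your plan omits the central mechanism that makes the paper's proof work, and the gap you acknowledge at the end (``the main obstacle is the passage from a slice-wise elliptic comparison to a genuine parabolic integral inequality'') is in fact unbridgeable as stated.

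The slice-wise comparison argument needs, on the contact set $E_c^t$, a quantitative relation of the form $\partial_t u \ge \partial_t v + \text{positive constant}$, $\mu$-a.e. Nothing in your proposal produces this: semi-concavity of $u$ and $v$ gives one-sided monotonicity of each function's own time derivative, but no pointwise comparison \emph{between} $\partial_t u$ and $\partial_t v$. Without it, the exponential factor $e^{\partial_t \tilde u - \partial_t v}$ appearing when you trade $e^{\partial_t \tilde u + F(t,z,\tilde u)}$ for $e^{\partial_t v + F(t,z,v)}$ is uncontrolled, and the chain $\varepsilon^n\mu + e^{\delta}e^{\partial_t\tilde u+F(\tilde u)}\mu \le e^{\partial_t v+F(v)}\mu$ (coming from Bedford--Taylor comparison plus the sub/supersolution bounds) cannot be closed to force $E_c$ empty. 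The paper obtains the missing derivative comparison from a genuinely parabolic ``first exit time'' argument (Lemma~\ref{Lemma 3.3}): one sets $t_1 := \inf\{t : \mu(E_t)>0\}$, proves $t_1>0$, and then compares the slice at $t$ near $t_1$ with the slice at $t-\delta$, where $E_{t-\delta}$ is $\mu$-null. Semi-concavity plus the local Lipschitz control on $\partial_t u$ then gives $\partial_t u(t,\cdot) > \partial_t^- v(t,\cdot)+\varepsilon/2$ $\mu$-a.e.\ on $E_t$ for those $t$. This sliding-in-time comparison is the heart of the proof and is absent from your sketch.

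Two further issues. First, the comparison you invoke is the \emph{mixed} one: the paper uses Dinew's mixed Monge--Amp\`ere inequality (Lemma~\ref{Lemma 3.1}) to pass from $(dd^c u)^n \ge f(dd^c v)^n$ to $(dd^c u)\wedge(dd^c v)^{n-1} \ge f^{1/n}(dd^c v)^n$, and then a mixed Bedford--Taylor comparison (Lemma~\ref{Lemma 3.2}) on the current $(dd^c u)\wedge(dd^c v)^{n-1}$; the superadditivity $(dd^c(u+\varepsilon\psi))^n \ge (dd^c u)^n + \varepsilon^n(dd^c\psi)^n$ you propose is a different (and, here, inadequate) tool — it doesn't let you compare $\int_{E_t}(dd^c u)^n$ and $\int_{E_t}(dd^c v)^n$ through $d\mu$ when $\mu$ gives no mass to $E_t$, whereas the mixed comparison produces a tautological inequality $e^{\varepsilon/2n}\int_{E_t}(dd^c v)^n\le\int_{E_t}(dd^c v)^n$ and hence $\int_{E_t}(dd^c v)^n=0$, after which the parabolic domination principle (Lemma~\ref{domination principle}) finishes. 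Second, the entire slice-wise argument requires $\partial_t u$ to be defined pointwise and locally uniformly Lipschitz in $t$ (condition~(f) of Lemma~\ref{Lemma 3.3}), which a general parabolic potential need not satisfy; the paper achieves this by replacing $u$ first with the time-dilated perturbation $W^s(t,z)=s^{-1}u(st,z)-C|s-1|(t+1)$ and then mollifying in $s$, so that the resulting $\Phi^\varepsilon$ has a genuinely Lipschitz time derivative. Your $\tilde u = u+\varepsilon\psi-\delta t-\eta(t)$ does not regularize $\partial_t u$, so the slice-wise comparison could not even be set up.
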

\begin{remark}\label{remark 1.3}
    Note that the conditions (a) and (d) in Theorem \ref{previous comparison principle by GLZ} are different from the ones of Theorem \ref{main result 1}. In \cite{GLZ21-2}, instead of comparing a subsolution and a supersolution, the authors compared an upper envelope of a subsolution and a supersolution. They used the assumption (\ref{additional assumption 2}) in \cite[Theorem 4.8]{GLZ21-2} to show that the upper envelope is locally uniformly semi-concave, which was used to prove the (joint) continuity of the envelope \cite[Theorem 6.4, Theorem 6.5]{GLZ21-2}. Also, due to their assumption on the measure, they could obtain the continuity of a supersolution. In this paper, we do not have the (joint) continuity of the upper envelope and a supersolution in general, so we need a different approach. Instead, we directly compare two locally uniformly semi-concave subsolution and supersolution without the assumption (\ref{additional assumption 2}).
\end{remark}
The existence of a solution to the Cauchy-Dirichlet problem under assumptions (\ref{additional assumption}), (\ref{additional assumption 2}), and (\ref{MA}) is proved in our previous work \cite{Ka25}. By using Theorem \ref{main result 1}, we now obtain the uniqueness of the solution to the Cauchy-Dirichlet problem, which is locally uniformly semi-concave in $(0, T)$.
\begin{corollary}\label{unique existence}
    Let $h$ be a Cauchy-Dirichlet boundary data satisfying (\ref{additional assumption}) and (\ref{additional assumption 2}). Then there exists a unique $u \in \mathcal{P}(\Omega_T) \cap L^{\infty}(\Omega_T)$ which is locally uniformly semi-concave in $(0, T)$ and satisfy
    \begin{align*}
        \begin{cases}
            &dt \wedge (dd^cu)^n = e^{\partial_tu+F(t, z, u)}dt \wedge d\mu \text{ in } \Omega_T, \\
            &\lim_{(t, z) \rightarrow (\tau, \zeta)}u(t, z) = h(\tau, \zeta) \text{ for all } (\tau, \zeta) \in [0, T)\times \partial \Omega, \\
            &\lim_{t \rightarrow 0+}u(t, \cdot) = h(0, \cdot) \text{ in } L^1(d\mu).
        \end{cases}
    \end{align*}
\end{corollary}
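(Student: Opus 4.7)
The plan is to deduce this statement directly from Theorem \ref{main result 1}, since the existence half is already furnished by the author's previous work \cite{Ka25}. Only uniqueness needs to be argued, and my strategy is to exploit the \emph{symmetry} of Theorem \ref{main result 1} in the semi-concavity hypothesis: both $u$ and $v$ are now assumed locally uniformly semi-concave, in contrast with Theorem \ref{previous comparison principle by GLZ} where this regularity was imposed only on the supersolution $v$. This symmetry is exactly what will let me swap the roles of the two candidate solutions.

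Concretely, suppose $u_1, u_2 \in \mathcal{P}(\Omega_T) \cap L^\infty(\Omega_T)$ are two locally uniformly semi-concave solutions of the Cauchy-Dirichlet problem with common boundary data $h$. Because each $u_i$ satisfies the flow equation with equality, it fulfills simultaneously the subsolution inequality (b) and the supersolution inequality (c) of Theorem \ref{main result 1}. Hypothesis (a) is built into the statement of the corollary, and since the common boundary data $h$ satisfies (\ref{additional assumption}), hypothesis (d) is verified no matter which of $u_1, u_2$ we designate as the subsolution.

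Next I would apply Theorem \ref{main result 1} twice. First, with $u := u_1$, $v := u_2$, and $h_1 = h_2 = h$, the trivial inequality $h \leq h$ yields $u_1 \leq u_2$. Then, exchanging the roles, $u := u_2$ and $v := u_1$ with the same boundary data gives $u_2 \leq u_1$. Combining the two inequalities produces $u_1 = u_2$, proving uniqueness.

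I do not anticipate any real obstacle here: all the analytic work has been absorbed into Theorem \ref{main result 1} itself. The only point one had to arrange, and which is precisely the improvement over \cite{GLZ21-2} emphasized in Remark \ref{remark 1.3}, is the removal of the asymmetric hypothesis (\ref{additional assumption 2}) from (d) and the symmetrization of (a); without either of these changes, one could not use the same comparison principle in both directions, and the argument for uniqueness in the above form would break down.
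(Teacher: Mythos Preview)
Your argument is correct and matches the paper's own proof exactly: existence is imported from \cite{Ka25}, and uniqueness follows by applying Theorem \ref{main result 1} in both directions with $h_1 = h_2 = h$. Your closing commentary is slightly off, however: the decisive generalization enabling this corollary is that Theorem \ref{main result 1} covers measures $\mu$ satisfying (\ref{MA}) rather than only $\mu = g\,dV$ with $g \in L^p$, $g>0$; the removal of (\ref{additional assumption 2}) from hypothesis (d) is irrelevant here since $h$ is assumed to satisfy it anyway, and even under the old asymmetric hypothesis (a) of Theorem \ref{previous comparison principle by GLZ} the two-direction argument would go through, since each candidate solution plays the role of $v$ once and both are semi-concave by assumption.
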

Even though we fixed $0 < T < +\infty$, the result of Corollary \ref{unique existence} covers the case when $T = +\infty$, which will be explained in Remark \ref{infinite time case}. As an application of Theorem \ref{main result 1}, we also show the following long-term behavior of the solution under some assumption on $F$.
\begin{theorem}
    Let us denote $T = +\infty$. Let $u \in \mathcal{P}(\Omega_T) \cap L^{\infty}(\Omega_T)$ be the unique function which is locally uniformly semi-concave in $(0, T)$ and satisfy
    \begin{align*}
        \begin{cases}
            & dt \wedge (dd^cu)^n = e^{\partial_tu+F(z, u)}dt \wedge d\mu \text{ in } \Omega_T, \\
            &\lim_{(t, z) \rightarrow (\tau, \zeta)}u(t, z) = 0 \text{ for all } (\tau, \zeta) \in [0, T) \times \partial \Omega, \\
            &\lim_{t \rightarrow 0+}u(t, \cdot) = \psi(\cdot) \text{ in } L^1(d\mu),
        \end{cases}
    \end{align*}
    where $\psi$ is given in (\ref{MA2}). If there exists a constant $L_F > 0$ such that 
    \begin{align*}
        \lvert F(z, r_1) - F(z, r_2)\rvert \geq L_F\lvert r_1-r_2\rvert
    \end{align*}
    for all $z \in \Omega$ and $r_1, r_2 \in J$, then $u(t, \cdot) \rightarrow \psi_{\infty}$ uniformly as $t \rightarrow +\infty$ for $\psi_{\infty} \in PSH(\Omega) \cap L^{\infty}(\Omega)$ satisfying
    \begin{align*}
        \begin{cases}
            &(dd^c\psi_{\infty})^n = e^{F(z, \psi_{\infty})}d\mu \text{ in } \Omega,\\
            &\lim_{z \rightarrow \partial \Omega}\psi_{\infty}(z) = 0.
        \end{cases}
    \end{align*}
\end{theorem}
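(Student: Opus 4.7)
The plan is to sandwich $u$ between two explicit exponentially decaying perturbations of $\psi_\infty$ and to invoke the comparison principle (Theorem \ref{main result 1}). The Lipschitz-from-below hypothesis on $F$ is precisely what makes such simple barriers work and produces the exponential convergence directly.

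First, I would dispose of the existence of a stationary solution $\psi_\infty$ to $(dd^c\psi_\infty)^n = e^{F(z,\psi_\infty)}d\mu$ with $\lim_{z\to\partial\Omega}\psi_\infty = 0$. For any bounded range $J\Subset\mathbb{R}$, the density $e^{F(z,r)}$ is bounded on $\Omega\times J$, so $e^{F(z,r)}d\mu$ is dominated by a multiple of $(dd^c\varphi)^n$. A monotone iteration based on Ko{\l}odziej's subsolution theorem, solving $(dd^c\psi_{k+1})^n = e^{F(z,\psi_k)}d\mu$ with zero boundary values, produces such a $\psi_\infty\in PSH(\Omega)\cap L^\infty(\Omega)$.

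Next, setting $C := \|\psi-\psi_\infty\|_{L^\infty(\Omega)}$, I would introduce the barriers
\[
\bar v(t,z) := \psi_\infty(z) + Ce^{-L_F t},\qquad \underline v(t,z) := \psi_\infty(z) - Ce^{-L_F t}.
\]
Both belong to $\mathcal{P}(\Omega_T)\cap L^\infty(\Omega_T)$ and are locally uniformly semi-concave in $t$, since their second time-derivatives $\pm L_F^2 Ce^{-L_F t}$ are locally bounded on $(0,\infty)$. To verify that $\bar v$ is a supersolution, compute $(dd^c\bar v)^n = e^{F(z,\psi_\infty)}d\mu$ and
\[
\partial_t\bar v + F(z,\bar v) = -L_F C e^{-L_F t} + F(z,\psi_\infty + Ce^{-L_F t}) \geq F(z,\psi_\infty),
\]
where the inequality is the exact cancellation produced by the Lipschitz-from-below bound $F(z,r_1) - F(z,r_2) \geq L_F(r_1-r_2)$ for $r_1\geq r_2$ (which follows from the hypothesis combined with the monotonicity of $F$). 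Hence $(dd^c\bar v)^n \leq e^{\partial_t\bar v + F(z,\bar v)}d\mu$; the symmetric calculation shows $\underline v$ satisfies (b).

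On $\partial\Omega$ the barriers take values $\pm Ce^{-L_F t}$, which satisfy (\ref{additional assumption}) since $t L_F C e^{-L_F t}$ is bounded on $(0,\infty)$; the initial-data comparisons $\underline v(0,\cdot) = \psi_\infty - C \leq \psi \leq \psi_\infty + C = \bar v(0,\cdot)$ hold by construction. Two applications of Theorem \ref{main result 1}, first with $u$ as subsolution and $\bar v$ as supersolution, then with $\underline v$ as subsolution and $u$ as supersolution, therefore yield
\[
|u(t,z) - \psi_\infty(z)| \leq Ce^{-L_F t}\quad\text{for all }(t,z)\in\Omega_T,
\]
which is the desired uniform convergence at exponential rate. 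The only step that falls outside the mechanical application of Theorem \ref{main result 1} is the construction of $\psi_\infty$; the entire barrier argument is essentially algebraic once the lower Lipschitz estimate on $F$ is exploited. Should one wish to avoid appealing to elliptic theory, the same perturbation trick applied to $u(t+s,\cdot)$ versus $u(t,\cdot)\pm\|u(s,\cdot)-\psi\|_\infty e^{-L_F t}$ would give a Cauchy-in-$t$ estimate, from which $\psi_\infty$ emerges as the uniform limit and satisfies the stationary equation by stability of the Monge-Amp\`ere operator under uniform convergence.
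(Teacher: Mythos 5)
Your proof is correct, and it takes a genuinely simpler route than the paper. The paper's subsolution is
\begin{align*}
W(t,z)=e^{-L_Ft}\psi(z)+(1-e^{-\kappa_Ft})\psi_\infty(z)+\Phi(t)e^{-L_Ft},\qquad \Phi(t)=\frac{n}{L_F}(e^{L_Ft}-1)\log(e^{L_Ft}-1)-nte^{L_Ft},
\end{align*}
which is designed so that $W(0,\cdot)=\psi$ exactly; mixing $\psi$ and $\psi_\infty$ with $t$-dependent coefficients forces $(dd^cW)^n\geq(1-e^{-\kappa_Ft})^n(dd^c\psi_\infty)^n$ to degenerate at $t=0$, and this degeneracy is compensated by the auxiliary term $\Phi$ (chosen precisely so that $\Phi'(t)e^{-L_Ft}=n\log(1-e^{-L_Ft})$). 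The supersolution $V(t,z)=(1-e^{-ct})\psi_\infty(z)+e^{-ct}$ in the paper likewise degenerates at $t=0$. Your barriers $\psi_\infty\pm Ce^{-L_Ft}$ avoid all of this: since the perturbation is $z$-independent, $(dd^c\bar v)^n=(dd^c\underline v)^n=(dd^c\psi_\infty)^n=e^{F(z,\psi_\infty)}d\mu$ on the nose, and the sub/supersolution inequalities follow from the single observation that the lower Lipschitz bound makes $\partial_t(\pm Ce^{-L_Ft})$ exactly absorb $F(z,\psi_\infty\pm Ce^{-L_Ft})-F(z,\psi_\infty)$. You pay for this simplicity only by loosening the initial-time comparison to $\underline v(0,\cdot)=\psi_\infty-C\leq\psi\leq\psi_\infty+C=\bar v(0,\cdot)$ rather than matching $\psi$ exactly, but that is all Theorem~\ref{main result 1} requires; the verification that the boundary traces $\pm Ce^{-L_Ft}$ satisfy (\ref{additional assumption}) and that the barriers are semi-concave is routine, as you note. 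Your argument also gives the cleaner two-sided rate $e^{-L_Ft}$, versus the paper's asymmetric rates $e^{-\kappa_Ft}$ and $e^{-ct}$. The only place where the paper is more economical is the existence of $\psi_\infty$, which it simply cites from Ko{\l}odziej; your sketched monotone iteration is a reasonable alternative but not needed.
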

Finally, let us give the historical background of the complex Monge-Amp\`ere flows before we finish the introduction. In 1985, Cao \cite{Cao85} reproved the Yau's solution to the Calabi conjecture and existence of K\"ahler-Einstein metric when the first Chern class is negative or zero by using the K\"ahler-Ricci flow. Since then, the K\"ahler-Ricci flow has been an important topic in itself with various research subjects, and we refer the including its behavior on Fano manifolds \cite{PSSW, PS06, Sz10}, extension on non-K\"ahler settings \cite{Gi11, TW15}, and so on. We refer the reader to survey papers \cite{SW13, To18} for more details. \\
\indent It is a classical result that solving K\"ahler-Ricci flow is equivalent to solving a parabolic PDE named complex Monge-Amp\`ere flows. After Cao's result, various parabolic Monge-Amp\`ere equations have been extensively studied, including Monge-Amp\`ere flows, J-flows, inverse Monge-Amp\`ere flows and so on. Recently, Phong and T\^o \cite{PT21} introduced the notion of parabolic $C$-subsolutions to study the solvability condition for a large class of parabolic Monge-Amp\`ere equations on compact Hermitian manifolds. Moreover, Picard and Zhang \cite{PZ20} showed the existence and long-time behavior of the solution to the more general parabolic Monge-Amp\`ere equations on compact K\"ahler manifold without the convexity or concavity condition on the operator, which is extended to the compact Hermitian manifold case by Smith \cite{Sm20}.\\
    \indent For the complex Monge-Amp\`ere flows, studying the weak solutions of degenerate equations has been a main interest for its geometrical application, especially for the analytic minimal model program proposed by Song and Tian \cite{ST17}. Eyssidieux, Guedj and Zeriahi \cite{EGZ15, EGZ16} first studied the viscosity approach. Later, Guedj, Lu and Zeriahi \cite{GLZ20, GLZ21-2} introduced the pluripotential approach. They also proved in \cite{GLZ21-1} that both approaches are equivalent under some assumptions. Recently, Dang \cite{Dan24} also extended the pluripotential approach to the Chern-Ricci flow to study the question proposed by Tosatti and Weinkove \cite{TW22}. We refer the reader to \cite{To21, Da22} for the application of viscosity and pluripotential approaches. Besides its geometrical applications, degenerate complex Monge-Amp\`ere flow has been also an interesting topic itself. We refer the reader to \cite{DLT20, DP22} and \cite{Do16, Do17b} for various generalizations of viscosity approach and more general initial data, respectively.\\
    \mbox{}\\
\indent \textbf{Organization.} In Section $2$, we recall the definition of parabolic Monge-Amp\`ere operator in \cite{GLZ21-2}. We also prove several useful lemmas about the operator. In Section $3$, we first prove some lemmas about mixed type inequalities and property of supersolutions. Then we give the proof of Theorem \ref{main result 1} in Theorem \ref{theorem 3.8} by using these lemmas. With this result, we also prove the uniqueness of the weak solution to the Cauchy-Dirichlet problem and study the long-term behavior of the solution. \\
\mbox{}\\
\indent \textbf{Acknowledgements.} I would like to thank my advisor, Ngoc Cuong Nguyen, for providing invaluable guidance and support. I would like to thank Quan Tuan Dang for helpful discussion on the proof of the Theorem \ref{theorem 3.8} and to Hyunsoo Ahn for a careful reading of the earlier version of its proof. I also thank to Do Hoang Son and S{\l}awmoir Dinew for helpful comments.

\section{Parabolic Monge-Amp\`ere Operator}
In this section, we prove a comparison principle, a global maximum principle, and a domination principle of the parabolic Monge-Amp\`ere operator. We first recall the definition of the operator in \cite{GLZ21-2}.
\begin{definition}\label{parabolic MA operator}{\cite[Definition 2.2]{GLZ21-2}}
    Let $u \in \mathcal{P}(\Omega_T) \cap L^{\infty}_{loc}(\Omega_T)$. The map
    \begin{align*}
        C_c(\Omega_T) \ni \chi \mapsto \int_{\Omega_T}\chi(t, z) dt \wedge (dd^cu)^n := \int_0^T dt \left(\int_{\Omega}\chi(t, \cdot)(dd^cu(t, \cdot))^n\right)
    \end{align*}
    defines a positive Radon measure in $\Omega_T$ denoted by $dt \wedge (dd^cu)^n$.
\end{definition}
Note that the integration in the above definition is well-defined since it was proved in \cite[Lemma 2.1]{GLZ21-2} that for $u \in \mathcal{P}(\Omega_T) \cap L^{\infty}_{loc}(\Omega_T)$ and $\chi \in C_c(\Omega_T)$, the map
\begin{align*}
    \Gamma_\chi : t \mapsto \int_{\Omega}\chi(t, \cdot)(dd^cu(t, \cdot))^n 
\end{align*}
is continuous in $(0, T)$. We first prove a lemma which extends this result.
\begin{lemma}\label{Et}
    Let $u \in \mathcal{P}(\Omega_T) \cap L^{\infty}_{loc}(\Omega_T)$ and $E \subset \Omega_T$ be a Borel subset. The map
    \begin{align*}
        \Gamma_{E} : t \mapsto \int_{E_t}(dd^cu(t, \cdot))^n
    \end{align*}
    is Borel measurable in $(0, T)$, where $E_t := \{z \in \Omega ~\mid~ (t, z) \in E\}$.
\end{lemma}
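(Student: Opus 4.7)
The plan is to bootstrap from the continuity result \cite[Lemma 2.1]{GLZ21-2} to all Borel sets by a Dynkin $\pi$--$\lambda$ argument, after localizing to pieces where the slice measures $(dd^cu(t,\cdot))^n$ have uniformly bounded total mass.

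First I would treat the open case. Given an open $U \subset \Omega_T$, pick a compact exhaustion $K_m \nearrow U$ and, via Urysohn, an increasing sequence $\chi_m \in C_c(U) \subset C_c(\Omega_T)$ with $\chi_m \nearrow \mathbf{1}_U$ pointwise. For each fixed $t$, $\chi_m(t, \cdot) \nearrow \mathbf{1}_{U_t}(\cdot)$, so monotone convergence against the Radon measure $(dd^cu(t, \cdot))^n$ on $\Omega$ yields
\[
    \int_{\Omega}\chi_m(t, \cdot)(dd^cu(t, \cdot))^n \;\nearrow\; \int_{U_t}(dd^cu(t, \cdot))^n = \Gamma_U(t).
\]
Each map $t \mapsto \int_{\Omega}\chi_m(t, \cdot)(dd^cu(t, \cdot))^n$ is continuous on $(0, T)$ by \cite[Lemma 2.1]{GLZ21-2}, so $\Gamma_U$ is the pointwise supremum of continuous functions, hence lower semicontinuous and in particular Borel measurable.

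Next I would localize in order to sidestep the possibility that $(dd^cu(t, \cdot))^n$ has infinite mass on $\Omega$. Choose open exhaustions $J_k \Subset J_{k+1} \Subset (0, T)$ and $\Omega^{(k)} \Subset \Omega^{(k+1)} \Subset \Omega$, and set $\Omega_T^{(k)} := J_k \times \Omega^{(k)}$. Since $u \in L^{\infty}_{loc}(\Omega_T)$ and $u(t, \cdot) \in PSH(\Omega)$, the Chern--Levine--Nirenberg inequality gives $\sup_{t \in J_k}(dd^cu(t, \cdot))^n(\Omega^{(k)}) < +\infty$. Let $\mathcal{D}_k$ be the class of Borel subsets $E \subset \Omega_T^{(k)}$ for which $t \mapsto (dd^cu(t, \cdot))^n(E_t)$ is Borel measurable on $(0, T)$ (extended by $0$ off $J_k$). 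Step~1 shows that every open subset of $\Omega_T^{(k)}$ lies in $\mathcal{D}_k$, and these open sets form a $\pi$-system generating the Borel $\sigma$-algebra of $\Omega_T^{(k)}$. Using the uniform finite-mass bound to justify $\mu_t(\Omega^{(k)} \setminus E_t) = \mu_t(\Omega^{(k)}) - \mu_t(E_t)$ for closure under complementation in $\Omega_T^{(k)}$, and $\sigma$-additivity of $\mu_t$ for closure under countable disjoint unions, $\mathcal{D}_k$ is a Dynkin system. The $\pi$--$\lambda$ theorem then forces $\mathcal{D}_k$ to contain every Borel subset of $\Omega_T^{(k)}$.

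Finally, for a general Borel $E \subset \Omega_T$, $E \cap \Omega_T^{(k)}$ is Borel in $\Omega_T^{(k)}$, so
\[
    t \mapsto (dd^cu(t, \cdot))^n\bigl(E_t \cap \Omega^{(k)}\bigr)\mathbf{1}_{J_k}(t)
\]
is Borel measurable; by monotone convergence this quantity increases to $\Gamma_E(t)$ as $k \to \infty$. As a pointwise limit of Borel functions, $\Gamma_E$ is Borel measurable, completing the proof. The main obstacle is the potential infinite mass of $(dd^cu(t, \cdot))^n$ on the full fiber $\Omega$, which blocks a direct complementation argument; this is precisely what the localization to the relatively compact pieces $\Omega_T^{(k)}$, together with the Chern--Levine--Nirenberg bound, overcomes.
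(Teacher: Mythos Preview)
Your argument is correct and follows essentially the same route as the paper: bootstrap from the continuity of $\Gamma_\chi$ in \cite[Lemma~2.1]{GLZ21-2}, localize via Chern--Levine--Nirenberg to get uniformly finite slice masses on relatively compact pieces, run a monotone-class/Dynkin argument there, and then exhaust. The only cosmetic differences are that the paper first treats compact $E$ (by $\chi_j \downarrow \mathbf{1}_E$) before open sets and phrases the monotone-class step via \cite[Theorem~7.26]{Fo84} rather than the $\pi$--$\lambda$ theorem, whereas you go straight to open sets via Urysohn; neither change affects the substance.
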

\begin{proof}
    We first assume that $E$ is compact. Let $\{\chi_j(t, z)\}_{j \geq 1}$ be a sequence of continuous positive test functions in $\Omega_T$ such that $\chi_j \downarrow \mathbf{1}_{E}$ as $j \rightarrow \infty$. Since $\chi_j(t, \cdot) \downarrow \mathbf{1}_{E_t}$ as $j \rightarrow \infty$ for all $t \in (0, T)$, we have $\Gamma_{\chi_j} \downarrow \Gamma_{E}$ as $j \rightarrow \infty$. Therefore $\Gamma_{E}$ is Borel measurable. Next, assume that $E$ is open. Let $\{E_j\}_{j \geq 1}$ be a sequence of compact subsets of $\Omega_T$ such that $E_j \uparrow E$ as $j \rightarrow \infty$. Since $\Gamma_{E_j}$ is a Borel measurable function for all $j$ and $\Gamma_{E_j} \uparrow \Gamma_{E}$ as $j \rightarrow \infty$, $\Gamma_{E}$ is also Borel measurable. \\
    \indent Finally, we prove the general case. Let us fix open subsets $I \Subset (0, T)$ and $U \Subset \Omega$. Since $u \in \mathcal{P}(\Omega_T) \cap L^{\infty}_{loc}(\Omega_T)$, it follows from the elliptic Chern-Levine-Nirenberg inequality (see e.g. \cite[Theorem 3.9]{GZ17}) that
    \begin{align}\label{uniform finiteness}
        \sup_{t \in I}\Gamma_{I \times U}(t) = \sup_{t \in I}\int_{U}(dd^cu(t, \cdot))^n < +\infty.
    \end{align}
    Let $\mathcal{M}$ be a family of all Borel subsets $E$ in $\Omega_T$ such that $\Gamma_{E \cap (I \times U)}(t)$ is a Borel measurable function on $(0, T)$. Then $\mathcal{M}$ satisfies the following properties 
    \begin{itemize}
        \item [(i)] $\mathcal{M}$ contains all the open sets in $\Omega_T$,
        \item [(ii)] If $E, F \in \mathcal{M}$ and $F \subset E$, then $E \setminus F \in \mathcal{M}$,
        \item [(iii)] $\mathcal{M}$ is closed under finite disjoint unions,
        \item [(iv)] $\mathcal{M}$ is closed under countable increasing unions and countable decreasing intersections.
    \end{itemize}
    Since (iii) and (iv) are straightforward, we only explain (i) and (ii). Indeed, (i) is true since if $E \subset \Omega_T$ is open, then $E \cap (I \times U)$ is open. Next, if $E, F \in \mathcal{M}$ and $F \subset E$, then
    \begin{equation}\label{difference}
    \begin{aligned}
        \Gamma_{(E \setminus F) \cap (I \times U)}(t) &= \int_{(E\setminus F)_t \cap U}(dd^cu(t, \cdot))^n\\ &= \int_{E_t \cap U}(dd^cu(t, \cdot))^n - \int_{F_t \cap U}(dd^cu(t, \cdot))^n \\
        &= \Gamma_{E \cap (I \times U)}(t) - \Gamma_{F \cap (I \times U)}(t).
    \end{aligned}    
    \end{equation}
        It implies that $E \setminus F \in \mathcal{M}$. Note that we used (\ref{uniform finiteness}) for the second equality of (\ref{difference}). Therefore by proceeding as the proof of \cite[Theorem 7.26]{Fo84}, one can prove that any Borel subset $E$ of $\Omega_T$ is contained in $\mathcal{M}$. Now, let $\{I_j\}_{j \geq 1}$ and $\{U_j\}_{h \geq 1}$ be sequence of relatively compact open subsets of $(0, T)$ and $\Omega$ such that $I_j \uparrow (0, T)$ and $U_j \uparrow \Omega$. If $E$ is a Borel subset of $\Omega_T$, then by the preceding arguments, $\Gamma_{E \cap (I_j \times U_j)}(t)$ is a Borel measurable function on $(0, T)$ for all $j$. By taking $j \rightarrow \infty$, we get the conclusion.
\end{proof}
From now, we use the notation $E_t$ for a Borel set $E \subset \Omega_T$ as defined in Lemma \ref{Et} throughout this paper. Note that because of Lemma \ref{Et}, the integration 
\begin{align}\label{integration}
    \int_0^T dt \int_{E_t}(dd^cu(t, \cdot))^n
\end{align}
is well-defined for all Borel subsets $E \subset \Omega_T$. We next show that (\ref{integration}) coincides with the integration of 
$E$ with respect to a Radon measure $dt \wedge (dd^cu)^n$, which is defined in Definition \ref{parabolic MA operator}. We will use this lemma to compute and compare the size of sublevel sets with respect to the measures of the form $dt \wedge (dd^cu)^n$ where $u \in \mathcal{P}(\Omega_T) \cap L^{\infty}(\Omega_T)$.
\begin{lemma}\label{2.3}
    Let $u \in \mathcal{P}(\Omega_T) \cap L^{\infty}_{loc}(\Omega_T)$. For any Borel subset $E \subset \Omega_T$, we have
    \begin{align*}
        \int_{E}dt \wedge (dd^cu)^n = \int_0^T dt \int_{E_t}(dd^cu(t, \cdot))^n.
    \end{align*}
\end{lemma}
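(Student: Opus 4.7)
The plan is to verify the identity on progressively larger classes of sets via monotone convergence, ending with a Dynkin-type argument modeled on the proof of Lemma \ref{Et}. Write $\mu_1(E) := \int_E dt \wedge (dd^cu)^n$ for the Radon measure side from Definition \ref{parabolic MA operator}, and $\mu_2(E) := \int_0^T dt \int_{E_t}(dd^cu(t,\cdot))^n$, which is well-defined on every Borel set by Lemma \ref{Et}. Throughout, I would fix relatively compact open sets $I \Subset (0,T)$ and $U \Subset \Omega$ and work inside $I \times U$, which is enough because $\Omega_T$ is exhausted by such products.

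Step one is the compact case. For $E$ compact in $I \times U$, pick a decreasing sequence of continuous test functions $\chi_j$ supported in $I \times U$ with $\chi_j \downarrow \mathbf{1}_E$. Definition \ref{parabolic MA operator} gives $\int_{\Omega_T}\chi_j \, dt \wedge (dd^cu)^n = \int_0^T \Gamma_{\chi_j}(t)\,dt$. On the left, $\mu_1$ is a Radon measure finite on compacta, so dominated convergence yields $\mu_1(E)$. On the right, for each fixed $t$ the outer regularity of the Monge--Amp\`ere measure $(dd^cu(t,\cdot))^n$ gives $\Gamma_{\chi_j}(t) \downarrow \Gamma_E(t)$, and the bound (\ref{uniform finiteness}) from Chern--Levine--Nirenberg provides a uniform integrable majorant, so dominated convergence in $t$ gives $\int_0^T \Gamma_{\chi_j}(t)\,dt \to \mu_2(E)$. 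Hence $\mu_1(E) = \mu_2(E)$ on compact sets.

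Step two promotes this to open and then to all Borel sets. For an open $E \subset I \times U$ I would exhaust it by compact $K_j \uparrow E$; monotone convergence on both sides, combined with the compact case, gives equality on open sets. To reach arbitrary Borel sets, define
\begin{align*}
\mathcal{M} := \{E \subset \Omega_T \text{ Borel} ~\mid~ \mu_1(E \cap (I\times U)) = \mu_2(E \cap (I\times U))\}.
\end{align*}
Just as in the proof of Lemma \ref{Et}, $\mathcal{M}$ contains the open sets and is closed under proper differences, finite disjoint unions, and countable monotone limits; the only non-obvious closure property is closure under differences, and here the uniform bound (\ref{uniform finiteness}) justifies the subtraction exactly as in (\ref{difference}). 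Invoking the same measure-theoretic argument via \cite[Theorem 7.26]{Fo84} shows $\mathcal{M}$ contains every Borel subset of $\Omega_T$. Exhausting $\Omega_T$ by relatively compact $I_j \times U_j \uparrow \Omega_T$ and applying monotone convergence one last time to both $\mu_1$ and $\mu_2$ concludes the proof.

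The main obstacle is the compact case: reducing the identity to Definition \ref{parabolic MA operator} requires two simultaneous passages to the limit (one in $z$ via outer regularity of $(dd^cu(t,\cdot))^n$, one in $t$ via Fubini-type interchange), and one must carefully use the Chern--Levine--Nirenberg bound to dominate the latter. Once that is settled, the extension to open sets is immediate from inner regularity, and the extension to all Borel sets is a routine monotone class exercise running in parallel with Lemma \ref{Et}.
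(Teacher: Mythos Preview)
Your proof is correct and follows essentially the same route as the paper: identical treatment of the compact and open cases via approximating test functions, monotone/dominated convergence, and the Chern--Levine--Nirenberg bound, followed by extension to all Borel sets. The only difference is in that last step, where the paper simply invokes inner and outer regularity of the Radon measure $dt \wedge (dd^cu)^n$, while you rerun the monotone-class argument of Lemma \ref{Et}; both completions are routine.
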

\begin{proof}
        Let us first assume that $E$ is compact. Let $J \subset (0, T)$ and $K \subset \Omega$ be compact subsets such that $E \subset J \times K$. It follows from the elliptic Chern-Levine-Nirenberg inequality that         \begin{align}\label{uniform estimate}
            \sup_{t \in (0, T)}\int_{E_t}(dd^cu(t, \cdot))^n \leq \sup_{t \in J}\int_K (dd^cu(t, \cdot))^n < +\infty.
        \end{align}
        Let $\{\chi_{j}(t, z)\}_{j \geq 1}$ be a sequence of continuous positive test functions in $\Omega_T$ such that $supp(\chi_j) \subset J \times K$ for all $j$ and $\chi_j \downarrow \mathbf{1}_{E}$ as $j \rightarrow \infty$. By the definition of $dt \wedge (dd^cu)^n$, 
    \begin{align}\label{1}
        \int_{\Omega_T}\chi_j(t, z) dt \wedge (dd^cu)^n = \int_0^T dt \int_{\Omega}\chi_j(t, \cdot)(dd^cu(t, \cdot))^n.
    \end{align}
    Since $\chi_j(t, \cdot) \downarrow \mathbf{1}_{E_t}$ as $j \rightarrow \infty$ for all $t \in (0, T)$, it follows from (\ref{uniform estimate}) and the dominated convergence theorem that
    \begin{align}\label{2}
        \lim_{j \rightarrow \infty}\int_0^T dt \int_{\Omega}\chi_j(t, \cdot)(dd^cu(t, \cdot))^n = \int_0^T dt \int_{E_t}(dd^cu(t, \cdot))^n.
    \end{align}
    Combining (\ref{1}) and (\ref{2}), we have
    \begin{align*}
        \int_{E}dt \wedge (dd^cu)^n = \lim_{j \rightarrow \infty}\int_{\Omega_T}\chi_j(t, z) dt \wedge (dd^cu)^n = \int_0^T dt \int_{E_t}(dd^cu(t, \cdot))^n. 
    \end{align*} 
    Next, for an open subset $E \subset \Omega_T$, let $\{E_j\}_{j \geq 1}$ be a sequence of compact subsets of $\Omega_T$ such that $E_j \uparrow E$ as $j \rightarrow \infty$. It follows from the monotone convergence theorem that
    \begin{align*}
         \lim_{j \rightarrow \infty}\int_{E_j}dt \wedge (dd^cu)^n = \int_Edt \wedge (dd^cu)^n
    \end{align*}
    and
    \begin{align*}
        \lim_{j \rightarrow \infty}\int_{0}^T dt \int_{(E_j)_t}(dd^cu(t, \cdot))^n= \int_0^T dt \int_{E_t}(dd^cu(t, \cdot))^n.
    \end{align*}
    Therefore we get the result since we have
    \begin{align*}
        \int_{E_j}dt \wedge (dd^cu)^n = \int_0^Tdt \int_{(E_j)_t}(dd^cu(t, \cdot))^n
    \end{align*}
    for all $j$. Finally, for a Borel subset $E \subset \Omega_T$, one can prove the result by using inner and outer regularity of the Radon measure $dt \wedge (dd^cu)^n$.
\end{proof}
We now prove a parabolic analogue of the classical comparison principle. 
\begin{lemma}\label{CP1}
    Let $u, v \in \mathcal{P}(\Omega_T) \cap L^{\infty}(\Omega_T)$ satisfying $\liminf_{(t, z) \rightarrow [0, T) \times \partial \Omega}(v(t, z) - u(t, z)) \geq 0$. Then
    \begin{align*}
        \int_{\{v < u\}}dt \wedge (dd^cu)^n \leq \int_{\{v < u\}}dt \wedge (dd^cv)^n.
    \end{align*}
\end{lemma}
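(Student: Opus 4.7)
The strategy is to reduce to the classical (elliptic) Bedford--Taylor comparison principle, applied at each time slice $t \in (0,T)$, and then integrate the resulting inequality in $t$. The slicing identity from Lemma \ref{2.3} was established precisely to make this possible.

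First, I apply Lemma \ref{2.3} with $E = \{v < u\}$. This is a Borel subset of $\Omega_T$ (since $u$ and $v$ are Carath\'eodory functions: continuous in $t$ by local uniform Lipschitzness and upper semicontinuous in $z$ by plurisubharmonicity, hence jointly Borel measurable). The lemma rewrites both sides of the desired inequality as the iterated integrals
\begin{align*}
    \int_{\{v<u\}} dt \wedge (dd^c u)^n &= \int_0^T dt \int_{\{v(t,\cdot)<u(t,\cdot)\}} (dd^c u(t,\cdot))^n,\\
    \int_{\{v<u\}} dt \wedge (dd^c v)^n &= \int_0^T dt \int_{\{v(t,\cdot)<u(t,\cdot)\}} (dd^c v(t,\cdot))^n,
\end{align*}
using that $(\{v<u\})_t = \{z \in \Omega : v(t,z) < u(t,z)\}$.

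Next, I fix $t_0 \in (0,T)$. Since $u, v \in \mathcal{P}(\Omega_T) \cap L^\infty(\Omega_T)$, the slices $u(t_0, \cdot)$ and $v(t_0, \cdot)$ are bounded plurisubharmonic functions on $\Omega$. The joint hypothesis $\liminf_{(t,z) \to [0,T)\times \partial\Omega}(v - u) \geq 0$ implies the sliced estimate $\liminf_{z \to \partial\Omega}(v(t_0, z) - u(t_0, z)) \geq 0$: indeed, for any $\epsilon > 0$ there exists $\delta > 0$ such that $v - u > -\epsilon$ on $\{(t,z) \in \Omega_T : d((t,z),[0,T)\times \partial\Omega) < \delta\}$, and any point $(t_0, z)$ with $z$ close enough to $\partial\Omega$ satisfies this. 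The classical Bedford--Taylor comparison principle for bounded plurisubharmonic functions therefore yields
\begin{align*}
    \int_{\{v(t_0,\cdot)<u(t_0,\cdot)\}} (dd^c u(t_0,\cdot))^n \leq \int_{\{v(t_0,\cdot)<u(t_0,\cdot)\}} (dd^c v(t_0,\cdot))^n.
\end{align*}

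Integrating this sliced inequality in $t_0 \in (0,T)$, which is legitimate by the Borel measurability of both integrands established in Lemma \ref{Et}, and combining with the slicing identities from Step 1 finishes the proof. I do not expect any serious obstacle: the only point requiring care is the transfer of the joint boundary condition to each time slice, and that is straightforward from the $\epsilon$-$\delta$ formulation of the liminf. Everything else is a direct application of Lemma \ref{2.3} and the classical comparison principle.
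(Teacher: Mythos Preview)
Your proposal is correct and follows essentially the same approach as the paper: apply the classical Bedford--Taylor comparison principle slice by slice, invoke Lemma~\ref{Et} for measurability of the inner integrals, and use Lemma~\ref{2.3} to convert the iterated integrals into the parabolic measures. The only cosmetic difference is that the paper justifies Borel measurability of $\{v<u\}$ via a mollification argument (using that $v$ is jointly upper semicontinuous and approximating $u$ by continuous functions), whereas you appeal directly to the Carath\'eodory structure of $u$ and $v$; both arguments are valid.
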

\begin{proof}
    Fix $\varepsilon_1 > 0$ and $0 < T' < T$. Since $\liminf_{(t, z) \rightarrow [0, T'] \times \partial \Omega}(v(t, z) - u(t, z)) \geq 0$, there exists a compact set $K \subset \Omega$ such that 
    \begin{align*}
        \{v<u-\varepsilon_1\} \cap ((0, T') \times \Omega) \subset [0, T'] \times K.
    \end{align*}
    Let $\chi_{\varepsilon_2}(z)$ be a standard smooth mollifier and define
    \begin{align*}
        u_{\varepsilon_2}(t, z) := u(t, z) * \chi_{\varepsilon_2}(z) = \int_{\Omega}u(t, \zeta)\chi_{\varepsilon_2}(z-\zeta)dV(\zeta).
    \end{align*}
    For sufficiently small $\varepsilon_2 > 0$, $u_{\varepsilon_2}$ is defined on $(0, T) \times \Omega'$ for some open set $\Omega'$ satisfying $K \Subset \Omega' \Subset \Omega$. Moreover, one can check that $u_{\varepsilon_2} \in \mathcal{P}((0, T) \times \Omega') \cap C((0, T) \times \Omega')$ and $u_{\varepsilon_2} \downarrow u$ as $\varepsilon_2 \rightarrow 0$. Since $v$ is (jointly) upper semi-continuous by \cite[Proposition 1.4]{GLZ21-2}, $\{v < u_{\varepsilon_2}-\varepsilon_1\} \cap ((0, T') \times \Omega')$ is an open subset of $\Omega_T$ which decreases to $\{v < u - \varepsilon_1\} \cap ((0, T') \times \Omega)$ as $\varepsilon_2 \rightarrow 0$. By taking $\varepsilon_1 \rightarrow 0$ and $T' \rightarrow T$, one can see that $\{v < u\}$ is a Borel subset of $\Omega_T$.\\
    \indent It follows from the classical comparison principle (see e.g. \cite[Theorem 3.29]{GZ17}) that for all $t \in (0, T)$,
    \begin{align}\label{inequality for integration}
        \int_{\{v(t, \cdot) < u(t, \cdot)\}}(dd^cu(t, \cdot))^n \leq \int_{\{v(t, \cdot) < u(t, \cdot)\}}(dd^cv(t, \cdot))^n.
    \end{align}
    It follows from Lemma \ref{Et} that each side of (\ref{inequality for integration}) is a Borel measurable function on $(0, T)$. By integrating each side with respect to $dt$ for $t \in (0, T)$,
    \begin{align*}
        \int_{0}^{T} dt \int_{\{v(t, \cdot) < u(t, \cdot)\}}(dd^cu(t, \cdot))^n \leq \int_{0}^{T} dt \int_{\{v(t, \cdot) < u(t, \cdot)\}}(dd^cv(t, \cdot))^n.
    \end{align*}
    It follows from Lemma \ref{2.3} that
    \begin{align*}
        \int_{\{v<u\} }dt \wedge (dd^cu)^n \leq \int_{\{v < u\}}dt \wedge (dd^cv)^n,
    \end{align*}
    which completes the proof.
\end{proof}
The following two lemmas are parabolic analogues of classical global maximum principle and domination principle. The proofs are almost same with the ones in \cite[Corollary 3.30, Corollary 3.31]{GZ17}.
\begin{lemma}
    Let $u, v \in \mathcal{P}(\Omega_T) \cap L^{\infty}(\Omega_T)$ satisfying $\liminf_{(t, z) \rightarrow [0, T) \times \partial \Omega}(v(t, z) - u(t, z)) \geq 0$. If $dt \wedge (dd^cv)^n \leq dt \wedge (dd^cu)^n$, then $u \leq v$ in $\Omega_T$.
\end{lemma}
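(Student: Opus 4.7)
The plan is to mimic the standard elliptic argument (compare \cite[Corollary 3.30]{GZ17}), substituting the parabolic comparison principle Lemma \ref{CP1} for its classical counterpart. The idea is to perturb $u$ downward by a time-independent, strictly plurisubharmonic quadratic in order to create a strict Monge--Amp\`ere gap, apply Lemma \ref{CP1}, and then pass to the limit in the perturbation parameter.

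Concretely, fix $A > \sup_{z \in \Omega}|z|^2$ and, for $\varepsilon > 0$, set $u_{\varepsilon}(t, z) := u(t, z) + \varepsilon(|z|^2 - A)$. Then $u_{\varepsilon} \in \mathcal{P}(\Omega_T) \cap L^{\infty}(\Omega_T)$, $u_{\varepsilon} \leq u$, and because $|z|^2 - A \leq 0$ on $\overline{\Omega}$ the boundary hypothesis is preserved: $\liminf_{(t,z) \to [0,T)\times\partial\Omega}(v - u_{\varepsilon}) \geq 0$. Lemma \ref{CP1} applied to the pair $(u_{\varepsilon}, v)$ yields
\[
\int_{\{v < u_{\varepsilon}\}} dt \wedge (dd^c u_{\varepsilon})^n \leq \int_{\{v < u_{\varepsilon}\}} dt \wedge (dd^c v)^n.
\]
On the other hand, the slicewise inequality $(dd^c u_{\varepsilon}(t, \cdot))^n \geq (dd^c u(t, \cdot))^n + \varepsilon^n c_n\, dV$, where $c_n > 0$ satisfies $(dd^c |z|^2)^n = c_n\, dV$, integrated in $t$ via Lemma \ref{2.3} and combined with the hypothesis $dt \wedge (dd^c v)^n \leq dt \wedge (dd^c u)^n$, gives
\[
\int_{\{v < u_{\varepsilon}\}} dt \wedge (dd^c u_{\varepsilon})^n \geq \int_{\{v < u_{\varepsilon}\}} dt \wedge (dd^c v)^n + \varepsilon^n c_n \lvert \{v < u_{\varepsilon}\}\rvert.
\]
The two displays together force $\lvert \{v < u_{\varepsilon}\}\rvert = 0$ for every small $\varepsilon > 0$.

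Since $u_{\varepsilon} \uparrow u$ as $\varepsilon \downarrow 0$, the sets $\{v < u_{\varepsilon}\}$ increase to $\{v < u\}$, so $\lvert \{v < u\}\rvert = 0$. By Fubini, $u(t, \cdot) \leq v(t, \cdot)$ almost everywhere in $\Omega$ for almost every $t \in (0, T)$, and the plurisubharmonicity of these slices promotes the inequality to a pointwise one on each such slice; the local uniform Lipschitz continuity in $t$ built into $\mathcal{P}(\Omega_T)$ then extends it to all $t \in (0, T)$, yielding $u \leq v$ on $\Omega_T$. The only step worth care is the passage between the slicewise Monge--Amp\`ere masses and the global parabolic measure $dt \wedge (dd^c \cdot)^n$, which is exactly what Lemma \ref{2.3} supplies; otherwise the argument is a direct transcription of the classical proof.
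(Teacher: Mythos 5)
Your argument is correct and follows essentially the same route as the paper: perturb $u$ downward by $\varepsilon\rho$ with $\rho = |z|^2 - A < 0$, invoke Lemma~\ref{CP1}, use the strict Monge--Amp\`ere gap produced by $\varepsilon^n (dd^c\rho)^n$ against the hypothesis $dt \wedge (dd^cv)^n \leq dt \wedge (dd^cu)^n$ to conclude $\{v < u_\varepsilon\}$ is Lebesgue-null, and let $\varepsilon \downarrow 0$. The only cosmetic difference is at the end: where you spell out the Fubini-plus-PSH-sub-mean-value argument combined with Lipschitz continuity in $t$ to upgrade ``a.e.'' to ``everywhere,'' the paper simply cites \cite[Corollary 1.9]{GLZ21-2}, which packages exactly this fact; your reasoning is a sound direct proof of the same.
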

\begin{proof}
    For $\varepsilon > 0$, we set $\phi_{\varepsilon} := u + \varepsilon \rho$, where $\rho(t, z) := \lvert z\rvert^2 - R^2$ for $R > 0$ large enough so that $\rho < 0$ on $\Omega_T$. Then $\phi_{\varepsilon} \in \mathcal{P}(\Omega_T) \cap L^{\infty}(\Omega_T)$ and $\liminf_{z \rightarrow \partial \Omega}(v(t, z) - \phi_{\varepsilon}(t, z)) \geq 0$ for all $t \in (0, T)$. It follows from Lemma \ref{CP1} that 
    \begin{align*}
        \int_{\{v < \phi_{\varepsilon}\}}dt \wedge (dd^c{\phi}_{\varepsilon})^n \leq \int_{\{v < \phi_{\varepsilon}\}}dt\wedge (dd^cv)^n.
    \end{align*}
    Since
    \begin{align*}
        dt \wedge (dd^c\phi_{\varepsilon})^n \geq dt \wedge (dd^cv)^n + \varepsilon^n dt \wedge (dd^c\rho)^n,
    \end{align*}
    we have $\int_{\{v < \phi_{\varepsilon}\}}dt \wedge (dd^c\rho)^n = 0$. It implies that the set $\{v < \phi_{\varepsilon}\}$ has Lebesgue measure zero, so that $\{v < u\}$ also has Lebesgue measure $0$. Therefore by \cite[Corollary 1.9]{GLZ21-2}, we have $u \leq v$ in $\Omega_T$. 
\end{proof}
\begin{lemma}\label{domination principle}
    Let $u, v \in \mathcal{P}(\Omega_T) \cap L^{\infty}(\Omega_T)$ satisfying $\liminf_{(t, z) \rightarrow [0, T) \times \partial \Omega}(v(t, z) - u(t, z)) \geq 0$. Assume that for all $u \leq v$ a.e. in $\Omega_T$ with respect to the measure $dt \wedge (dd^cv)^n$. Then $u \leq v$ in $\Omega_T$.
\end{lemma}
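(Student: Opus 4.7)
The plan is to adapt the perturbation argument used in the preceding lemma, replacing the pointwise Monge--Amp\`ere inequality by the a.e.\ hypothesis. Fix $R > 0$ so large that $\rho(t, z) := |z|^2 - R^2 < 0$ on $\Omega_T$, and for $\varepsilon > 0$ set $\phi_{\varepsilon} := u + \varepsilon \rho$. Then $\phi_{\varepsilon} \in \mathcal{P}(\Omega_T) \cap L^{\infty}(\Omega_T)$ with $\phi_{\varepsilon} < u$ pointwise, and near $[0,T)\times \partial\Omega$ we have $\rho \to |z|^2 - R^2$ bounded away from zero, so
$$\liminf_{(t,z) \to [0,T)\times\partial\Omega}\bigl(v(t,z) - \phi_{\varepsilon}(t,z)\bigr) \geq \varepsilon(R^2 - \sup_{\partial\Omega}|z|^2) > 0.$$
Hence Lemma \ref{CP1} applies to the pair $(\phi_{\varepsilon}, v)$ and gives
$$\int_{\{v < \phi_{\varepsilon}\}} dt \wedge (dd^c \phi_{\varepsilon})^n \leq \int_{\{v < \phi_{\varepsilon}\}} dt \wedge (dd^c v)^n.$$

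The key observation is that because $\rho < 0$ we have $\{v < \phi_{\varepsilon}\} \subset \{v < u\}$, and by hypothesis $\{v < u\}$ has measure zero with respect to $dt \wedge (dd^c v)^n$. Therefore the right-hand side above vanishes. Now, from the elementary inequality
$$(dd^c \phi_{\varepsilon}(t,\cdot))^n \geq \varepsilon^n (dd^c \rho)^n$$
valid for each $t$, and the fact that $(dd^c \rho)^n$ is a positive constant multiple of Lebesgue measure on $\Omega$, it follows that $dt \wedge (dd^c \phi_{\varepsilon})^n$ dominates a positive multiple of the Lebesgue measure on $\Omega_T$. Combined with the vanishing of the integral, this forces $\{v < \phi_{\varepsilon}\}$ to have Lebesgue measure zero in $\Omega_T$.

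Finally, letting $\varepsilon \downarrow 0$ we have $\{v < \phi_{\varepsilon}\} \uparrow \{v < u\}$ (as in the preceding lemma, with Borel measurability of these sets verified by the same mollification argument used there), so $\{v < u\}$ itself has Lebesgue measure zero. An appeal to \cite[Corollary 1.9]{GLZ21-2} then concludes $u \leq v$ in $\Omega_T$. There is no substantive obstacle here; the only points requiring care are the strictly positive boundary gap produced by the perturbation $\varepsilon\rho$ (so that Lemma \ref{CP1} may be invoked) and the Borel measurability of the sublevel sets, both of which are already handled by the techniques of the previous lemma.
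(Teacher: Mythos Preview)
Your argument is correct and follows essentially the same route as the paper's proof: perturb $u$ to $\phi_\varepsilon = u + \varepsilon\rho$, apply Lemma~\ref{CP1}, use the inclusion $\{v<\phi_\varepsilon\}\subset\{v<u\}$ together with the hypothesis to kill the right-hand side, and then exploit $dt\wedge(dd^c\phi_\varepsilon)^n \geq \varepsilon^n\,dt\wedge(dd^c\rho)^n$ to conclude that $\{v<u\}$ has Lebesgue measure zero, finishing with \cite[Corollary~1.9]{GLZ21-2}. The only difference is that you supply slightly more detail on the strict positivity of the boundary gap and the monotone exhaustion $\{v<\phi_\varepsilon\}\uparrow\{v<u\}$, neither of which changes the substance of the argument.
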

\begin{proof}
    For $\varepsilon > 0$, we set $u_{\varepsilon} := \phi_{\varepsilon} + \varepsilon \rho$, where $\rho(t, z) := \lvert z\rvert^2 - R^2$ for $R > 0$ large enough so that $\rho < 0$ on $\Omega_T$. By Lemma \ref{CP1}, 
    \begin{align*}
        \int_{\{v < \phi_{\varepsilon}\}}dt \wedge (dd^c\phi_{\varepsilon})^n \leq \int_{\{v < \phi_{\varepsilon}\}}dt \wedge (dd^cv)^n \leq \int_{\{v < u\}}dt \wedge (dd^cv)^n = 0.
    \end{align*}
    Since $dt \wedge (dd^c\phi_{\varepsilon})^n \geq \varepsilon^n dt \wedge (dd^c\rho)^n$, it implies that the set $\{v < \phi_{\varepsilon}\}$ has Lebesgue measure $0$, so that $\{v < u\}$ also has Lebesgue measure $0$. Therefore by \cite[Corollary 1.9]{GLZ21-2}, we have $u \leq v$ in $\Omega_T$.
\end{proof}
\section{A Generalized Comparison Principle}
In this section, we give a proof of Theorem \ref{main result 1}. We first prove some useful lemmas which will be used to prove it.
\subsection{Mixed type inequalities}
We show the following lemmas on mixed type inequalities. The main result of \cite{GLZ19} was used to characterize a subsolution in the last part of the proof of \cite[Theorem 6.6]{GLZ21-2}, which is not available in our case. The following lemmas will be used instead. 
    \begin{lemma}\label{Lemma 3.1}
    Assume that $\varphi, \psi \in PSH(\Omega) \cap L^{\infty}(\Omega)$ satisfy
    \begin{align*}
        (dd^c\varphi)^n \geq f(dd^c\psi)^n
    \end{align*}
    for some $f \in L^1(\Omega, (dd^c\psi)^n)$. Then, we have
    \begin{align*}
        (dd^c\varphi) \wedge (dd^c\psi)^{n-1} \geq f^{1/n}(dd^c\psi)^n.
    \end{align*}
\end{lemma}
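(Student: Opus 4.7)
The lemma is a mixed Monge--Amp\`ere inequality of G\aa rding type. My plan is to establish it first in the smooth case by a pointwise computation, and then pass to the general bounded case via approximation.

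In the smooth case, suppose $\varphi$ and $\psi$ are smooth and strictly plurisubharmonic (so $f$ is continuous). At each point $z$, choose holomorphic coordinates diagonalizing $dd^c\psi(z)$ with eigenvalues $\lambda_j > 0$, and let $(a_{j\bar k})$ denote the positive Hermitian matrix of $dd^c\varphi(z)$ in these coordinates. A direct wedge-product calculation gives
\[
\frac{(dd^c\varphi)\wedge(dd^c\psi)^{n-1}}{(dd^c\psi)^n}(z) \;=\; \frac{1}{n}\sum_{j=1}^{n}\frac{a_{j\bar j}}{\lambda_j},
\]
and the AM--GM inequality followed by Hadamard's inequality $\det(a_{j\bar k}) \leq \prod_j a_{j\bar j}$ yields
\[
\frac{1}{n}\sum_j \frac{a_{j\bar j}}{\lambda_j} \;\geq\; \Bigl(\prod_j \frac{a_{j\bar j}}{\lambda_j}\Bigr)^{1/n} \;\geq\; \Bigl(\frac{\det(a_{j\bar k})}{\prod_j \lambda_j}\Bigr)^{1/n} \;=\; f(z)^{1/n},
\]
which gives the desired pointwise inequality.

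For general bounded $\varphi, \psi$, I would approximate on any relatively compact subdomain by decreasing smooth strictly plurisubharmonic sequences $\varphi_j \downarrow \varphi$ and $\psi_j \downarrow \psi$ (convolution plus a small quadratic perturbation), so that the Bedford--Taylor convergence theorem guarantees weak convergence of all Monge--Amp\`ere products involved. The subtlety is that the hypothesis $(dd^c\varphi)^n \geq f(dd^c\psi)^n$ does not transfer directly to the regularizations, since the pointwise densities $(dd^c\varphi_j)^n/(dd^c\psi_j)^n$ can behave wildly. To circumvent this, I would truncate $f_k := \min(f, k)$, use Ko{\l}odziej's subsolution theorem to produce a bounded plurisubharmonic $\tilde\varphi_k$ solving $(dd^c\tilde\varphi_k)^n = f_k (dd^c\psi)^n$, apply the smooth-case inequality to regularizations of $(\tilde\varphi_k, \psi)$, and then let $k \to \infty$ by monotone convergence. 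The domination principle of Lemma \ref{domination principle} can be invoked to compare the resulting mixed measure with $(dd^c\varphi)\wedge(dd^c\psi)^{n-1}$.

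The principal obstacle is precisely this last limiting step. Since $(dd^c\varphi)^n$ need not be absolutely continuous with respect to $(dd^c\psi)^n$ and $f$ is only in $L^1$, the inequality of measures must be verified by testing against non-negative continuous functions, combining the Bedford--Taylor convergence with lower semi-continuity and the monotonicity of $t \mapsto t^{1/n}$ to control the limit of $f_k^{1/n}(dd^c\psi)^n$. Alternatively, this is precisely Dinew's mixed Monge--Amp\`ere inequality in the case $k = 1$, and could be invoked as a black box.
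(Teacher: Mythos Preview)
The paper's proof is a one-line citation of Dinew's mixed Monge--Amp\`ere inequality \cite[Theorem 1.3]{Di09}, which is exactly the black-box alternative you mention at the end of your proposal. So your final remark already matches the paper's approach.

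Your more ambitious route---proving the smooth case directly and then approximating---is instructive but incomplete. The smooth computation via AM--GM and Hadamard is correct and gives good intuition. The gap lies in the approximation step. Even after producing $\tilde\varphi_k$ with $(dd^c\tilde\varphi_k)^n = f_k(dd^c\psi)^n$, regularizing $(\tilde\varphi_k,\psi)$ does not give smooth pairs satisfying the pointwise density inequality: the Bedford--Taylor convergence is only weak, and the ratio $(dd^c\tilde\varphi_{k,j})^n/(dd^c\psi_j)^n$ can be wildly different from $f_k$. More seriously, your invocation of the domination principle to pass from $(dd^c\tilde\varphi_k)\wedge(dd^c\psi)^{n-1}$ to $(dd^c\varphi)\wedge(dd^c\psi)^{n-1}$ does not work: the domination principle (and Lemma~\ref{domination principle} in particular is the \emph{parabolic} version) compares potentials, not mixed measures, and there is no monotonicity of $(dd^c\cdot)\wedge(dd^c\psi)^{n-1}$ with respect to the ordering of Monge--Amp\`ere measures. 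Dinew's actual proof proceeds quite differently, through Cegrell-class techniques and a reduction to densities in $L^p$, precisely because the naive regularization route hits these obstacles. So your direct argument, as sketched, cannot be completed without essentially reproving Dinew's theorem; citing it is the right call.
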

\begin{proof}
    It follows directly from Dinew's mixed type inequality in \cite[Theorem 1.3]{Di09}. 
\end{proof}
\begin{lemma}\label{Lemma 3.2}
    Assume that $u, \varphi \in PSH(\Omega) \cap L^{\infty}(\Omega)$ satisfy $\liminf_{z \rightarrow \partial \Omega}(\varphi(z)-u(z)) \geq 0$. Then
    \begin{align*}
        \int_{\{\varphi< u\}}(dd^cu) \wedge (dd^c\varphi)^{n-1} \leq \int_{\{\varphi < u\}}(dd^c\varphi)^n.
    \end{align*}
\end{lemma}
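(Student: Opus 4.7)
The plan is to reduce to a setting where $u$ and $\varphi$ coincide outside a compact subset of $\Omega$ via the standard regularization $u_{\varepsilon} := \max(\varphi, u - \varepsilon)$, and then to extract the desired inequality from a global integration-by-parts identity for mixed Monge--Amp\`ere measures. Fix $\varepsilon > 0$. The boundary hypothesis $\liminf_{z \to \partial \Omega}(\varphi - u) \geq 0$ ensures that $\{u > \varphi + \varepsilon\}$ is relatively compact in $\Omega$, so $u_{\varepsilon} \in PSH(\Omega) \cap L^{\infty}(\Omega)$ with $u_{\varepsilon} \geq \varphi$ on $\Omega$ and $u_{\varepsilon} = \varphi$ in a neighborhood of $\partial \Omega$.

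The first substantive step is the global mass identity
\begin{equation*}
\int_{\Omega} (dd^c u_{\varepsilon}) \wedge (dd^c \varphi)^{n-1} = \int_{\Omega} (dd^c \varphi)^n.
\end{equation*}
This is a standard Bedford--Taylor integration-by-parts fact: the function $u_{\varepsilon} - \varphi$ is bounded and supported in a compact subset of $\Omega$, so approximating $u_{\varepsilon}$ and $\varphi$ from above by smooth plurisubharmonic sequences which agree outside this compact and invoking Stokes' theorem yields the identity, the boundary term being zero. This is the step I expect to need the most care, though it is by now standard in the pluripotential literature (cf.\ \cite{GZ17}).

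Granted the identity, I localize the measures: on the open set $\{u > \varphi + \varepsilon\}$ one has $u_{\varepsilon} = u - \varepsilon$, so $(dd^c u_{\varepsilon}) \wedge (dd^c \varphi)^{n-1}$ agrees there with $(dd^c u) \wedge (dd^c \varphi)^{n-1}$; on the open set $\{u < \varphi + \varepsilon\}$, $u_{\varepsilon} = \varphi$, so the mixed measure restricts to $(dd^c \varphi)^n$. Subtracting the common contribution on $\{u < \varphi + \varepsilon\}$ from the global identity and discarding the nonnegative residue supported on the level set $\{u = \varphi + \varepsilon\}$, I obtain
\begin{equation*}
\int_{\{u > \varphi + \varepsilon\}} (dd^c u) \wedge (dd^c \varphi)^{n-1} \leq \int_{\{u \geq \varphi + \varepsilon\}} (dd^c \varphi)^n.
\end{equation*}
Finally, as $\varepsilon \downarrow 0$ both $\{u > \varphi + \varepsilon\}$ and $\{u \geq \varphi + \varepsilon\}$ increase to $\{\varphi < u\}$, so monotone convergence applied to the Radon measures $(dd^c u) \wedge (dd^c \varphi)^{n-1}$ and $(dd^c \varphi)^n$ gives the claimed inequality.
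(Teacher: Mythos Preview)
Your proof is correct and follows the same Bedford--Taylor approach that the paper itself invokes (it gives no independent argument, only a reference to the classical comparison principle). One minor caveat: the global identity $\int_\Omega (dd^c u_\varepsilon)\wedge(dd^c\varphi)^{n-1}=\int_\Omega(dd^c\varphi)^n$ may read $+\infty=+\infty$ when $(dd^c\varphi)^n$ has infinite total mass on $\Omega$, in which case ``subtracting the common contribution on $\{u<\varphi+\varepsilon\}$'' is ill-defined; the standard fix is to carry out the integration-by-parts and the subtraction on a relatively compact open set $U\Supset\{u\geq\varphi+\varepsilon\}$ (where both integrals are finite by Chern--Levine--Nirenberg and the measures agree on $U\setminus\{u\geq\varphi+\varepsilon\}$), after which your argument goes through verbatim.
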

\begin{proof}
    The proof is almost same with the one for the comparison principle by Bedford and Taylor in \cite[Theorem 4.2]{BT82} (see also \cite[Theorem 1.16]{Ko05}).
\end{proof}
\subsection{Boundary behavior of supersolutions}
Next, we show the following lemma about the property of supersolutions. Let $h$ be a Cauchy-Dirichlet boundary data and denote $v_0(z) := h(0, z)$ for $z \in \Omega$.
\begin{lemma}\label{Lemma 3.7}
    Let us assume that $v \in \mathcal{P}(\Omega_T) \cap L^{\infty}(\Omega_T)$ satisfy
    \begin{align*}
        \begin{cases}
            &dt \wedge (dd^cv)^n \leq e^{\partial_tv+F(t, z, v)}dt \wedge d\mu, \\
            &\lim_{(t, z) \rightarrow (\tau, \zeta) \in [0, T ) \times\partial \Omega}v(t, z) = h(\tau, \zeta),\\
            &\lim_{t \rightarrow 0+}v(t, \cdot) = v_0 \text{ in } L^1(d\mu).
        \end{cases}
    \end{align*}
    Then there exists a function $\eta(t) \in C([0, T))$ such that $\eta(0) = 0$ and
        \begin{align*}
            v(t, z) \geq v_0(z) - \eta(t)
        \end{align*}
        for $dt \wedge d\mu$-a.e. $(t, z) \in \Omega_T$.
\end{lemma}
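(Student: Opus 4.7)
The plan is to construct a subsolution barrier $\tilde u$ that matches $v_0$ at $t = 0$, lies below $v$ on the lateral boundary, and can be compared against $v$ via a parabolic comparison argument. For a small parameter $0 < a < 1/T$, I consider
\[
\tilde u(t, z) := (1 - at) v_0(z) + a t\, \psi(z) - \rho(t),
\]
where $\psi$ is the bounded PSH function from (\ref{MA2}) with $(dd^c \psi)^n = d\mu$ and vanishing on $\partial \Omega$, and $\rho \in C([0, T))$ with $\rho(0) = 0$ is to be chosen. Since $v_0, \psi \in PSH(\Omega) \cap L^\infty(\Omega)$ and $1 - at \in (0, 1]$ throughout $[0, T)$, $\tilde u \in \mathcal{P}(\Omega_T) \cap L^\infty(\Omega_T)$ with $\tilde u(0, \cdot) = v_0$, and the Newton binomial expansion of $(dd^c \tilde u)^n$ together with positivity of mixed Monge--Amp\`ere currents yields the crucial lower bound
\[
(dd^c \tilde u(t, \cdot))^n \geq (at)^n (dd^c \psi)^n = (at)^n \, d\mu.
\]

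Next I fix the lateral boundary modulus $\omega(t) := \sup_{\zeta \in \partial\Omega} \max\{h(0, \zeta) - h(t, \zeta), 0\}$, which by continuity of $h$ on $[0, T) \times \partial \Omega$ and compactness of $\partial \Omega$ lies in $C([0, T))$ with $\omega(0) = 0$. Using the $L^\infty$-bounds on $v_0, \psi, F$, the subsolution condition $(dd^c \tilde u)^n \geq e^{\partial_t \tilde u + F(t, z, \tilde u)} d\mu$ reduces (thanks to the estimate above) to the ODE $\rho'(t) \geq C_1 - n \log(at)$ for a constant $C_1$ depending on these bounds, and the lateral boundary condition $\tilde u(t, \zeta) \leq h(t, \zeta)$ for $\zeta \in \partial \Omega$ reduces to $\rho(t) \geq \omega(t) + a t \|h_0\|_\infty$. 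Both are satisfied by
\[
\rho(t) := \omega(t) + a t \|h_0\|_\infty + (C_1 + n) t - n t \log(a t),
\]
which lies in $C([0, T))$ with $\rho(0) = 0$ since $t \log(at) \to 0$ as $t \to 0^+$.

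The hardest step is to deduce $\tilde u \leq v$ almost everywhere with respect to $dt \wedge d\mu$ on $\Omega_T$. Applying Lemma \ref{CP1} to the subsolution $\tilde u$ and the supersolution $v$ (the lateral boundary condition $\liminf_{(t, z) \to [0, T) \times \partial \Omega}(v - \tilde u) \geq 0$ holds by construction) yields
\[
\int_{\{v < \tilde u\}} dt \wedge (dd^c \tilde u)^n \leq \int_{\{v < \tilde u\}} dt \wedge (dd^c v)^n.
\]
Inserting the subsolution and supersolution Monge--Amp\`ere inequalities and invoking the monotonicity of $F$ in $r$ (so that $F(\tilde u) \geq F(v)$ on $\{v < \tilde u\}$), the task reduces to forcing this set to have zero $dt \wedge d\mu$-measure. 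This is the main obstacle: the sign of $\partial_t(\tilde u - v)$ on the bad set is not controlled a priori. Closing the argument will require strengthening $\tilde u$ into a strict subsolution (by adding $\epsilon t$ to $\rho$ and passing $\epsilon \to 0$), together with the initial matching $\tilde u(0, \cdot) = v_0 = \lim_{t \to 0^+} v(t, \cdot)$ in $L^1(d\mu)$. Granting the comparison, and using $|\psi - v_0| \leq 2M$ with $M := \|v_0\|_\infty + \|\psi\|_\infty$, we obtain
\[
v(t, z) \geq \tilde u(t, z) \geq v_0(z) - 2 a M t - \rho(t);
\]
setting $\eta(t) := 2aMt + \rho(t) \in C([0, T))$ with $\eta(0) = 0$ gives the desired inequality $v(t, z) \geq v_0(z) - \eta(t)$ for $dt \wedge d\mu$-a.e.\ $(t, z) \in \Omega_T$.
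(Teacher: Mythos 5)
Your barrier construction is reasonable in spirit, and in fact resembles the paper's (both interpolate $v_0$ with $\psi$ and subtract a time-dependent correction with a $t\log t$ term). But the proposal contains a genuine gap, which you yourself flag: the comparison step does not close. Lemma \ref{CP1} is the \emph{elliptic-slice} comparison principle --- it compares the Monge--Amp\`ere masses of $\tilde u(t,\cdot)$ and $v(t,\cdot)$ on the bad set but says nothing about the time derivatives $\partial_t\tilde u$ and $\partial_t v$. After inserting the sub/supersolution inequalities, you are left needing $\partial_t\tilde u \le \partial_t v$ (up to lower-order terms) on $\{v < \tilde u\}$, and there is no a priori control of the sign of $\partial_t(\tilde u - v)$ on that set. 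Adding $\epsilon t$ and letting $\epsilon\to 0$ does not fix this: the obstruction is not lack of strictness but lack of any handle on the time-derivative difference. A parabolic comparison principle is genuinely needed, not the elliptic one.

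The paper resolves this with two ideas that your proposal is missing. First, it does not try to compare against $v$ directly starting from $t=0$: it compares a barrier $U^{\varepsilon}(t,z)=v(\varepsilon,z)-\delta(s)+t(\psi(z)-C_1)+n(t\log(t/T)-t)$ against the \emph{time-shifted} supersolution $V^{\varepsilon}(t,z)=v(t+\varepsilon,z)+C_2\varepsilon t$. Using $v(\varepsilon,\cdot)$ as the initial slice (and only later passing $\varepsilon\to 0$ along a subsequence where $v(\varepsilon_k,\cdot)\to v_0$ $\mu$-a.e.) avoids the delicate $t=0$ boundary issue and makes both functions honest parabolic potentials on $\Omega_s$ with matching initial slices. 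Second, and crucially, the comparison is carried out via Lemma \ref{Lemma 3.3}, not Lemma \ref{CP1}. Lemma \ref{Lemma 3.3} is the paper's parabolic comparison device: it exploits semi-concavity and the Lipschitz time-derivative of the barrier ($\partial_tU^{\varepsilon}(t,z)=\psi(z)-C_1+n\log(t/T)$) to control $\partial_t(U^{\varepsilon}-V^{\varepsilon})$ near the infimum time $t_1$ at which the bad set first becomes $\mu$-nonnull, and then uses Dinew's mixed inequality and the domination principle to force the bad set to be empty. That mechanism is precisely what is absent from your proposal, so as written the proof does not go through.
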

\begin{proof}
We proceed as the proof of \cite[Lemma 6.9]{GLZ21-2}. Let us fix $0 < S < T$. For $s > 0$ small enough, we set
\begin{align*}
    \delta(s) := \sup\{\lvert h(\tau, z) - h(t, z)\rvert ~\mid~z \in \partial\Omega, ~t, \tau \in [0, S], ~\lvert t-\tau\rvert \leq s\}.
\end{align*}
Since $h$ is continuous on $[0, T) \times \partial \Omega$, $\lim_{s \rightarrow 0+}\delta(s) = 0$. \\
\indent Now, fix $0 < s < \frac{S}{2}$. We show that for some constant $C > 0$,
\begin{align*}
    v(s, z) \geq v_0(z) - \delta(s) + s(\psi(z)-C) + n(s\log(s/T) - s)
\end{align*}
holds for $\mu$-a.e. $z \in \Omega$. Note that $\psi$ is given in (\ref{MA2}).\\
\indent Fix $0 < \varepsilon \leq s$. Let us define a map $(t, z) \mapsto U^{\varepsilon}(t, z)$ on $\Omega_s$ by
\begin{align*}
    U^{\varepsilon}(t, z) := v(\varepsilon, z) - \delta(s) + t(\psi(z) - C_1) + n(t\log(t/T)-t)
\end{align*}
for some constant $C_1 > 0$ so that $U^{\varepsilon}$ satisfies
\begin{align*}
    dt \wedge (dd^cU^{\varepsilon})^n \geq e^{\partial_tU^{\varepsilon}+F(t, z, U^{\varepsilon})}dt \wedge d\mu \text{ in } \Omega_s. 
\end{align*}
Next, let us define a map $(t, z) \mapsto V^{\varepsilon}(t, z)$ on $\Omega_s$ by 
\begin{align*}
    V^{\varepsilon}(t, z) := v(t+\varepsilon, z) + C_2\varepsilon t
\end{align*}
for some constant $C_2 > 0$ so that $V^{\varepsilon}$ satisfies
\begin{align*}
    dt \wedge (dd^cV^{\varepsilon})^n \leq e^{\partial_tV^{\varepsilon}+F(t, z, V^{\varepsilon})}dt \wedge d\mu \text{ in } \Omega_s.
\end{align*}
Note that $U^{\varepsilon}$ and $V^{\varepsilon}$ satisfy the following properties
\begin{itemize}
    \item [(i)] $\liminf_{z \rightarrow \partial \Omega}(V^{\varepsilon}(t, z)-U^{\varepsilon}(t, z)) \geq 0$ for all $t \in (0, T)$,
    \item [(ii)] $\lim_{t \rightarrow 0+}U^{\varepsilon}(t, z) = v(\varepsilon, z) - \delta(s)$ uniformly for all $z \in \Omega$,
    \item [(iii)] $\lim_{t \rightarrow 0+}V^{\varepsilon}(t, z) = v(\varepsilon, z)$ uniformly for all $z \in \Omega$.
\end{itemize}
Also, $U^{\varepsilon}$ and $V^{\varepsilon}$ are locally uniformly semi-concave in $(0, s)$, and 
\begin{align*}
    \partial_tU^{\varepsilon}(t, z) = \psi(z)-C_1+n\log(t/T),
\end{align*}
which is locally uniformly Lipschitz in $(0, T)$. Therefore it follows from Lemma \ref{Lemma 3.3} that $V^{\varepsilon}(t, z) \geq U^{\varepsilon}(t, z)$ for all $(t, z) \in \Omega_s$. Recall that $\lim_{t \rightarrow 0+}v(t, \cdot) = v_0$ in $L^1(d\mu)$. Hence there exists a sequence $\{\varepsilon_k\}_{k \geq 1}$ such that $\varepsilon_k \downarrow 0$ as $k \rightarrow \infty$ and $v(t_k, z) \rightarrow v(0, z)$ as $k \rightarrow \infty$ for $d\mu$-a.e. $z \in \Omega$. Hence after substituting $\varepsilon$ by $\varepsilon_k$ and taking $k \rightarrow \infty$, we get
\begin{align*}
    v(t, z) \geq v_0(z) - \delta(s) + t(\psi(z)-C_1)+n(t\log(t/T)-t)
\end{align*}
holds for all $t \in (0, s)$ and $\mu$-a.e. $z \in \Omega$. By taking $t \rightarrow s$, we get the result.
\end{proof}
    \subsection{Main result}
We now introduce a lemma which plays a key role in this paper. Recall that for a bounded parabolic potential $u$ which is locally uniformly semi-concave in $(0, T)$, 
\begin{align*}
    \partial_t^+u(t, z) := \lim_{\delta \rightarrow 0+}\frac{u(t+\delta, z)-u(t, z)}{\delta}
\end{align*}
and
\begin{align*}
    \partial_t^-u(t, z) := \lim_{\delta \rightarrow 0-}\frac{u(t+\delta, z)-u(t, z)}{\delta}
\end{align*}
exist for all $(t, z) \in \Omega_T$ and $\partial_tu(t, z)$ exists when $\partial_t^+u(t, z) = \partial_t^-u(t, z)$.
\begin{lemma}\label{Lemma 3.3}
      Assume that $u, v \in \mathcal{P}(\Omega_T) \cap L^{\infty}(\Omega_T)$ satisfy 
    \begin{enumerate}
        \item [(a)] $\liminf_{(t, z) \rightarrow [0, T) \times \partial\Omega}(v(t, z) - u(t, z)) \geq 0$,
        \item [(b)] for each $\varepsilon > 0$, there exists $\Tilde{\varepsilon} > 0$ such that
        \begin{align*}
            \int_{\{v +(t+1)\varepsilon< u\} \cap ((0, \Tilde{\varepsilon}) \times \Omega)}dt \wedge d\mu = 0,
        \end{align*}
        \item [(c)] $u$ and $v$ are locally uniformly semi-concave in $(0, T)$,
        \item [(d)] for a.e. $t \in (0, T)$,
        \begin{align*}
            &(dd^cu(t, \cdot)) \wedge (dd^cv(t, \cdot))^{n-1} \\&\quad\geq \exp\left(\frac{\partial_tu(t,\cdot)-\partial_tv(t, \cdot)+F(t, \cdot, u)-F(t, \cdot, v)}{n}\right)(dd^cv(t, \cdot))^n,
        \end{align*}
        \item [(e)] $dt \wedge (dd^cv)^n \leq e^{\partial_tv+F(t, z, v)}dt \wedge d\mu$ in $\Omega_T$,
        \item [(f)] $\partial_tu(t, z)$ is well-defined for all $(t, z) \in \Omega_T$ and locally uniformly Lipschitz in $(0, T)$.
    \end{enumerate}
    Then $u \leq v$ in $\Omega_T$.
\end{lemma}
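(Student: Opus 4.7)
The plan is to argue by contradiction: perturb $u$ by $-(t+1)\varepsilon$ to sharpen hypothesis (d) into a strict mixed inequality, apply the classical elliptic comparison (Lemma \ref{Lemma 3.2}) slice-by-slice, and time-integrate against $d\mu$ to exploit the initial condition (b).

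Set $\tilde u(t,z) := u(t,z) - (t+1)\varepsilon$ for small $\varepsilon > 0$. Then $(dd^c\tilde u(t,\cdot))^k = (dd^c u(t,\cdot))^k$, $\partial_t\tilde u = \partial_tu - \varepsilon$, and $F(t,\cdot,u) \geq F(t,\cdot,\tilde u)$ by monotonicity of $F$ in $r$, so (d) upgrades to
\[
(dd^c\tilde u(t,\cdot))\wedge(dd^cv(t,\cdot))^{n-1} \geq e^{\varepsilon/n}\, e^{H/n}\,(dd^cv(t,\cdot))^n
\]
for a.e.\ $t$, where $H := \partial_t\tilde u - \partial_tv + F(t,\cdot,\tilde u) - F(t,\cdot,v)$. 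Let $G := \{\tilde u > v\}$. By (a), $\liminf_{z\to\partial\Omega}(v(t,z)-\tilde u(t,z)) \geq (t+1)\varepsilon > 0$, so $G_t\Subset\Omega$ uniformly for $t$ in compacta of $[0,T)$; by (b), $(dt\wedge d\mu)(G\cap((0,\tilde\varepsilon)\times\Omega)) = 0$ for some $\tilde\varepsilon > 0$.

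For a.e.\ $t$, Lemma \ref{Lemma 3.2} applied to $\tilde u(t,\cdot)$ and $v(t,\cdot)$ yields $\int_{G_t}(dd^c\tilde u)\wedge(dd^cv)^{n-1}\leq\int_{G_t}(dd^cv)^n$. Substituting the sharpened mixed inequality, using $F(\tilde u)\geq F(v)$ on $G_t$ (whence $H \geq \partial_t\tilde u - \partial_tv$), and applying $e^x \geq 1+x$, one extracts
\[
\int_{G_t}\bigl(\partial_t\tilde u(t,\cdot) - \partial_tv(t,\cdot) + \varepsilon\bigr)(dd^cv(t,\cdot))^n \leq 0.
\]
I would then track the energy $\Phi(t) := \int_\Omega (\tilde u - v)^+(t,z)\,d\mu(z)$, which is well-defined since $u,v\in L^\infty(\Omega_T)$ and $PSH(\Omega)\subset L^1_{\mathrm{loc}}(\Omega,\mu)$. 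By the semi-concavity (c) and the Lipschitz hypothesis (f), $\Phi$ is absolutely continuous with $\Phi'(t)=\int_{G_t}(\partial_t\tilde u-\partial_tv)\,d\mu$ for a.e.\ $t$. Combining with (e), which makes $(dd^cv)^n$ absolutely continuous with respect to $d\mu$ with bounded density, the goal is to deduce $\Phi'(t)\leq 0$ a.e.\ where $\Phi(t)>0$; together with $\Phi(0^+)=0$ forced by (b), this yields $\Phi\equiv 0$, so $\tilde u \leq v$ $\mu$-a.e. Lemma \ref{domination principle} then upgrades this to $\tilde u \leq v$ pointwise, and letting $\varepsilon\downarrow 0$ gives $u\leq v$.

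The principal obstacle is the transfer from the $(dd^cv)^n$-weighted slice bound to the $d\mu$-weighted bound required for $\Phi'$: on the locus where $(dd^cv(t,\cdot))^n = 0$ yet $d\mu > 0$, hypothesis (d) is vacuous and $\partial_t\tilde u - \partial_tv$ is uncontrolled. To handle this I anticipate deploying the bounded plurisubharmonic $\varphi$ from (\ref{MA}) with $(dd^c\varphi)^n \geq d\mu$: by running the slice argument with $v$ replaced by $v+\delta\varphi$, whose Monge--Amp\`ere mass $(dd^c(v+\delta\varphi))^n$ dominates $\delta^n d\mu$ after binomial expansion, the inequality above re-emerges weighted by a measure bounded below by $\delta^n d\mu$. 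The $\delta$-dependent error terms in $H$ arising from the shift are absorbed into the $\varepsilon$-gain, and a sequential limit $\delta\downarrow 0$ followed by $\varepsilon\downarrow 0$ closes the argument.
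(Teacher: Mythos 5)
The slice bound you extract is weighted by $(dd^cv(t,\cdot))^n$, namely $\int_{G_t}(\partial_t\tilde u - \partial_t v + \varepsilon)\,(dd^cv(t,\cdot))^n \leq 0$, while the quantity $\Phi'(t)$ you want to control is the $d\mu$-integral $\int_{G_t}(\partial_t\tilde u - \partial_t v)\,d\mu$. You correctly flag this as the principal obstacle, but the absolute continuity furnished by (e) is $(dd^cv(t,\cdot))^n \ll d\mu$ fiberwise (via \cite[Proposition 3.2]{GLZ21-2}), which is the \emph{wrong} direction for your purposes: it lets one upgrade a $\mu$-a.e.\ \emph{pointwise} bound to a $(dd^cv)^n$-a.e.\ one, not convert a $(dd^cv)^n$-weighted integral inequality into a $d\mu$-weighted one. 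On $\{(dd^cv(t,\cdot))^n = 0\}\cap\{d\mu > 0\}$ the slice bound says nothing about $\partial_t\tilde u-\partial_t v$, so $\Phi'$ can be positive there. This is precisely why the paper does not follow an energy/Gronwall route: it sets $t_1 := \inf\{t:\mu(E_t)>0\}$, shows $t_1>0$ via (b) and (e), then uses (c), (f) and $\mu(E_{t-\delta})=0$ near $t_1$ to manufacture a \emph{pointwise} gap $\partial_t u(t,z) > \partial_t^- v(t,z) + \varepsilon/2$ for $\mu$-a.e.\ $z\in E_t$, pushes this through (e) in the correct direction to a $(dd^cv)^n$-a.e.\ gap, forces $\int_{E_t}(dd^cv)^n = 0$ from (d) and Lemma~\ref{Lemma 3.2}, and concludes by the domination principle that $E_t=\emptyset$, contradicting the choice of $t_1$.

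Your proposed fix via $v + \delta\varphi$ does not close the gap as stated. Hypotheses (d) and (e) are formulated for $v$, not for $v+\delta\varphi$: the expansion of $(dd^c(v+\delta\varphi))^n$ contains cross terms $(dd^c\varphi)^k\wedge(dd^cv)^{n-k}$ for which (e) gives no control relative to $d\mu$, and (d) gives no mixed-type bound for $(dd^cu)\wedge(dd^c(v+\delta\varphi))^{n-1}$ against $(dd^c(v+\delta\varphi))^n$; re-deriving such a bound via Lemma~\ref{Lemma 3.1} would require a density inequality between $(dd^cu)^n$ and $(dd^c(v+\delta\varphi))^n$ that the hypotheses do not supply. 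Moreover the errors introduced by the shift are measure-valued, not scalar, so they are not obviously absorbed by the scalar $\varepsilon$-gain. As written the energy route does not go through; either supply an explicit control on $\Phi'$ over $\{(dd^cv)^n=0\}$, or adopt the first-time/pointwise argument, which never needs the reverse transfer.
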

\begin{proof}
    Fix $\varepsilon> 0$. Let us define
    \begin{align*}
        E := \{v+(t+1)\varepsilon < u\}. 
    \end{align*}
    Our goal is to show that $E = \emptyset$. Assume that it is not the case. Let us denote $E_t := \{z \in \Omega ~\mid~ (t, z) \in E\}$. We define 
    \begin{align*}
        t_1 := \inf\{t \in (0, T)~\mid~ \mu(E_t) > 0\}.
    \end{align*}
    The following Figure 1 describes our strategy of the proof. 
    \begin{figure}[hbt]
        \centering
        \includegraphics[width=0.7\linewidth]{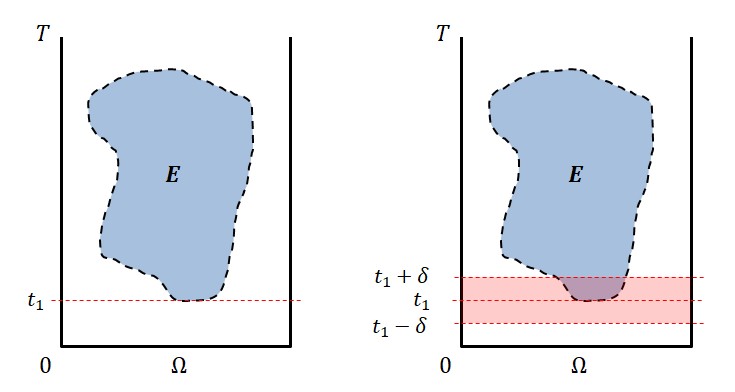}
        \label{figure_1}
        \caption{}
    \end{figure}
    \noindent The idea is to prove that $t_1 > 0$ as in the left figure, and then compute the size of the set $E \cap((t_1-\delta, t_1+\delta) \times \Omega )$ which is described in the right figure, where $\delta > 0$ will be chosen later. We will show that it is actually an empty set, which contradicts to the definition of $t_1$. \\
    \indent We first show that $t_1 > 0$. By the assumptions (b) and (e), there exists $\Tilde{\varepsilon} > 0$ such that
    \begin{align*}
        0 \leq \int_{E \cap ((0, \Tilde{\varepsilon}) \times \Omega)}dt \wedge (dd^cv)^n \leq \int_{E \cap ((0, \Tilde{\varepsilon}) \times \Omega)}e^{\partial_tv+F(t, z, v)}dt \wedge d\mu = 0,
    \end{align*}
    which implies that
    \begin{align*}
        \int_{E \cap ((0, \Tilde{\varepsilon}) \times \Omega)}dt \wedge (dd^cv)^n = 0.
    \end{align*}
    It follows from Lemma \ref{domination principle} that $E \cap ((0, \Tilde{\varepsilon}) \times \Omega) = \emptyset$. Therefore $\mu(E_t) = 0$ for all $t \in (0, \Tilde{\varepsilon})$, which implies that $t_1 \geq \Tilde{\varepsilon} > 0$.
    \\ 
    \indent Next, by the assumption (f), there exists $C_1 > 0$ such that
    \begin{align*}
        \lvert \partial_tu(t, z) - \partial_tu(s, z)\rvert < C_1\lvert t-s\rvert
    \end{align*}
    for all $t, s \in \left[t_1-\frac{\Tilde{\varepsilon}}{2}, t_1+\frac{\Tilde{\varepsilon}}{2}\right]$ and $z \in \Omega$. It follows from the assumption (c) that there exists $C_2> 0$ such that $u(t, z) - C_2t^2$ and $v(t, z) - C_2t^2$ are concave in $\left[t_1-\frac{\Tilde{\varepsilon}}{2}, t_1+\frac{\Tilde{\varepsilon}}{2}\right]$ for all $z \in \Omega$. Let us define $C_3 := \max\{C_1, C_2\}$, so that we have
    \begin{align}\label{lipschitzness of derivative}
        \lvert \partial_tu(t, z) - \partial_tu(s, z)\rvert < C_3\lvert t-s\rvert
    \end{align}
    for all $t, s \in \left[t_1-\frac{\Tilde{\varepsilon}}{2}, t_1+\frac{\Tilde{\varepsilon}}{2}\right]$ and $z \in \Omega$, and 
    \begin{align}\label{concavity of u and v}
        u(t, z) - C_3t^2, v(t, z) - C_3t^2 \text{ are concave in } \left[t_1-\frac{\Tilde{\varepsilon}}{2}, t_1+\frac{\Tilde{\varepsilon}}{2}\right]
    \end{align}
    for all $z \in \Omega.$\\
    \indent Let us denote $\delta := \min\{\frac{\Tilde{\varepsilon}}{4}, \frac{\varepsilon}{6C_3}\}$ and fix $t \in (t_1-\delta, t_1+\delta)$. Since $\mu(E_{t-\delta}) = 0$, we have
    \begin{align*}
        u(t, z) > v(t, z) + (t+1)\varepsilon \text{ for all } z \in E_t
    \end{align*}
    and
    \begin{align*}
        u(t-\delta, z) \leq v(t-\delta, z) + (t-\delta+1)\varepsilon \text{ for $\mu$-a.e. }z \in E_t.
    \end{align*}
    Therefore we have
    \begin{align}\label{3.3}
        \frac{u(t, z) - u(t-\delta, z)}{\delta} > \frac{v(t, z) - v(t-\delta, z)}{\delta}+\varepsilon \text{ for $\mu$-a.e. $z \in E_t$.} 
    \end{align}
    It follows from (\ref{concavity of u and v}) that
    \begin{align*}
        \frac{(u(t, z) - C_3t^2)-(u(t-\delta, z)-C_3(t-\delta)^2)}{\delta}&\leq \partial_t^-u(t-\delta, z) - 2C_3(t-\delta)\\&\quad = \partial_tu(t-\delta, z) - 2C_3(t-\delta)
    \end{align*}
    and
    \begin{align*}
        \frac{(v(t, z)-C_3t^2)-(v(t-\delta, z)-C_3(t-\delta)^2)}{\delta}\geq \partial_t^-v(t, z) - 2C_3t
    \end{align*}
    for all $z \in E_t$. Hence one may induce from (\ref{3.3}) that
    \begin{align}\label{3.4}
        \partial_tu(t-\delta, z) + 2C_3\delta > \partial_t^-v(t, z) + \varepsilon \text{ for $\mu$-a.e. $z \in E_t$.}
    \end{align}
    Note that $t$ and $t-\delta$ are in $\left(t_1-\frac{\Tilde{\varepsilon}}{2}, t_1+\frac{\Tilde{\varepsilon}}{2}\right)$. By combining (\ref{lipschitzness of derivative}) and (\ref{3.4}), we get
    \begin{align}\label{comparing time derivatives of u and v}
        \partial_tu(t, z) > \partial_t^-v(t, z) + \frac{\varepsilon}{2} \text{ for $\mu$-a.e. $z \in E_t$.}
    \end{align}
    \indent We now show that $E \cap ((t_1-\delta, t_1+\delta) \times \Omega) = \emptyset$. For a.e. $t \in (t_1-\delta, t_1+\delta)$, we have
    \begin{equation}\label{fiberwise inequality}
        \begin{aligned}
        &\int_{E(t, \cdot)}e^{\varepsilon/2n}(dd^cv(t, \cdot))^n \\
        &\quad\leq \int_{E(t, \cdot)}\exp\left(\frac{\partial_tu(t, \cdot)-\partial_tv(t, \cdot)+F(t, \cdot, u)-F(t, \cdot, v)}{n}\right)(dd^cv(t, \cdot))^n \\
        &\quad\leq \int_{E(t, \cdot)}(dd^cu(t, \cdot)) \wedge (dd^cv(t, \cdot))^{n-1} \\
        &\quad \leq \int_{E(t, \cdot)}(dd^cv(t, \cdot))^n,
        \end{aligned}
    \end{equation}
    where we used (\ref{comparing time derivatives of u and v}) for the first inequality and Lemma \ref{Lemma 3.2} for the last inequality. Indeed, we could use (\ref{comparing time derivatives of u and v}) since 
    \begin{align*}
        (dd^cv(t, \cdot))^n \leq e^{\partial_tv(t, \cdot)+F(t, \cdot, v)}d\mu
    \end{align*}
    for a.e. $t \in (0, T)$ by \cite[Proposition 3.2]{GLZ21-2}, which implies that for a.e. $t \in (t_1-\delta, t_1+\delta)$, (\ref{comparing time derivatives of u and v}) holds for $(dd^cv(t, \cdot))^n$-a.e. $z \in E_t$. By integrating both sides of  (\ref{fiberwise inequality}) with respect to $t$ for $(t_1-\delta, t_1+\delta)$, we  get
    \begin{align*}
        0 \leq e^{\varepsilon/2n}\int_{t_1-\delta}^{t_1+\delta}dt \int_{E(t, \cdot)}(dd^cv(t, \cdot))^n \leq \int_{t_1-\delta}^{t_1+\delta}dt \int_{E(t, \cdot)}(dd^cv(t, \cdot))^n,
    \end{align*}
    which implies that
    \begin{align*}
        \int_{t_1-\delta}^{t_1+\delta}dt \int_{E(t, \cdot)}(dd^cv(t, \cdot))^n = 0.
    \end{align*}
    It follows from Lemma \ref{2.3} that
    \begin{align*}
        \int_{E \cap ((t_1-\delta, t_1+\delta) \times \Omega)}dt \wedge (dd^cv)^n = \int_{t_1-\delta}^{t_1+\delta}dt\int_{E(t, \cdot)}(dd^cv(t, \cdot))^n = 0.
    \end{align*}
    This implies that $E \cap ((t_1-\delta, t_1+\delta) \times \Omega) = \emptyset$ by Lemma \ref{domination principle}. Since it contradicts to the definition of $t_1$, we conclude that $E = \emptyset$.
    \end{proof}
    \begin{remark}
    We provide an example for which the assumptions of the above lemma are satisfied. First, $u$ and $v$ satisfy the condition (b) if $\lim_{t \rightarrow 0+}v(t, z) = v_0(z)$ and $\lim_{t \rightarrow 0+}u(t, z) = u_0(z)$ uniformly where $v_0, u_0 \in PSH(\Omega)\cap L^{\infty}(\Omega)$ satisfy $v_0 \geq u_0$. Indeed, for all $z \in \Omega$, there exists $\delta_1 > 0$ such that if $0 < \lvert t\rvert < \delta_1$, then $\lvert v_0(z) - v(t, z)\rvert < \varepsilon/2$ and $\lvert u_0(z) - u(t, z)\rvert < \varepsilon/2$. Therefore for all $(t, z) \in (0, \delta_1) \times \Omega$, 
    \begin{align*}
        v(t, z) + (t+1)\varepsilon &> v_0(z) + \left(t+\frac{1}{2}\right)\varepsilon \\&\geq 
        u_0(z) + \left(t+\frac{1}{2}\right)\varepsilon\\ &> u(t, z) + t\varepsilon\\ &> u(t, z).
    \end{align*}
    \indent Next, $u$ and $v$ satisfy the condition (d) if $dt \wedge (dd^cu)^n \geq e^{\partial_tu+F(t, z, u)}dt \wedge d\mu$ and $dt \wedge (dd^cv)^n \leq e^{\partial_tv+F(t, z, v)}dt \wedge d\mu$. Indeed, by using \cite[Proposition 3.2]{GLZ21-2}, one can show that for a.e. $t \in (0, T)$, $u$ and $v$ satisfy
    \begin{align*}
        (dd^cu(t, \cdot))^n \geq e^{\partial_tu(t, \cdot)+F(t, \cdot, u)}d\mu \geq e^{\partial_tu(t, \cdot)-\partial_tv(t, \cdot)+F(t, \cdot, u) - F(t, \cdot, v)}(dd^cv(t, \cdot))^n.
    \end{align*}
    It follows from Lemma \ref{Lemma 3.1} that for a.e. $t \in (0, T)$,
    \begin{align*}
        &(dd^cu(t, \cdot)) \wedge (dd^cv(t, \cdot))^{n-1} \\
        &\quad\geq \exp\left(\frac{\partial_tu(t, \cdot)-\partial_tv(t, \cdot)+F(t, \cdot, u)-F(t, \cdot, v)}{n}\right)(dd^cv(t, \cdot))^n.
    \end{align*}
\end{remark}
We next prove a lemma about the condition (b) on Lemma \ref{Lemma 3.3}. This will be used later while proving Theorem \ref{main result 1}.
\begin{lemma}\label{Lemma 3.6}
     Assume that $u, v \in \mathcal{P}(\Omega_T) \cap L^{\infty}(\Omega_T)$ satisfy 
    \begin{enumerate}
        \item [(a)] $\liminf_{(t, z) \rightarrow [0, T) \times \partial\Omega}(v(t, z) - u(t, z)) \geq 0$,
        \item [(b)] there exists a function $\eta(t) \in C([0, T])$ such that $\eta(0) = 0$ and 
        \begin{align*}
            v(t, z) \geq v_0(z) - \eta(t)
        \end{align*}
        for $dt \wedge d\mu$-a.e. $(t, z) \in \Omega_T$,
        \item [(c)] for each open $U \Subset \Omega$, there exists $A > 0$ such that
        \begin{align*}
            \int_{U}u(t, \cdot)d\mu \leq \int_{U}u_0d\mu + At,
        \end{align*}
        \item [(d)] $u_0(z) \leq v_0(z)$ for all $z \in \Omega$.
    \end{enumerate}
    Then for each $\varepsilon > 0$, there exists $\Tilde{\varepsilon} > 0$ such that
    \begin{align*}
        \int_{\{v+(t+1)\varepsilon < u\}\cap ((0,\Tilde{\varepsilon}) \times \Omega)}dt \wedge d\mu = 0.
    \end{align*}
\end{lemma}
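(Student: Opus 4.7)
The plan is: use (a) to localize $E_\varepsilon := \{v + (t+1)\varepsilon < u\}$ spatially to a compact subset of $\Omega$, use (b) and (d) to convert the defining inequality into a strict pointwise lower bound of $u$ over $u_0$ on $E_\varepsilon$, then combine with (c) via a pluripotential argument to force $\mu(E_\varepsilon(t)) = 0$ for almost every small $t$.

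First, from (a), for any $\gamma \in (0, \varepsilon/2)$ we have $u - v < \gamma < (t+1)\varepsilon$ on a neighborhood of $[0, T) \times \partial\Omega$ in $\overline{\Omega_T}$, so $E_\varepsilon$ is disjoint from that neighborhood. This yields $\tilde{\varepsilon}_1 > 0$ and a compact $K \subset \Omega$ with $E_\varepsilon \cap ((0, \tilde{\varepsilon}_1) \times \Omega) \subset (0, \tilde{\varepsilon}_1) \times K$. Next, by (b) and (d), for $dt \wedge d\mu$-a.e.\ $(t, z) \in E_\varepsilon$,
\[
u(t, z) > v(t, z) + (t+1)\varepsilon \;\geq\; v_0(z) - \eta(t) + (t+1)\varepsilon \;\geq\; u_0(z) + (t+1)\varepsilon - \eta(t).
\]
Since $\eta$ is continuous with $\eta(0) = 0$, pick $\tilde{\varepsilon}_2 > 0$ so that $\eta(t) < \varepsilon/2$ on $(0, \tilde{\varepsilon}_2)$; set $\tilde{\varepsilon}_0 := \min(\tilde{\varepsilon}_1, \tilde{\varepsilon}_2)$. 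Then on $E_\varepsilon \cap ((0, \tilde{\varepsilon}_0) \times \Omega)$ ($dt \wedge d\mu$-a.e.), $u(t, z) > u_0(z) + \varepsilon/2$. Picking an open $U$ with $K \subset U \Subset \Omega$, (c) gives $\int_U (u(t, \cdot) - u_0) \, d\mu \leq At$ for some $A = A(U) > 0$.

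The main obstacle lies in closing the gap between the integral bound just obtained and the pointwise strict bound on $E_\varepsilon$: direct splitting of $\int_U (u - u_0) \, d\mu$ into its parts over $E_\varepsilon(t) \cap U$ and $U \setminus E_\varepsilon(t)$ only yields $\frac{\varepsilon}{2}\mu(E_\varepsilon(t)) \leq At + 2(\|u\|_\infty + \|u_0\|_\infty)\mu(U)$, which gives boundedness of $\mu(E_\varepsilon(t))$ but not its vanishing. To bridge the gap I would upgrade (c) into a pointwise $\mu$-a.e.\ bound $u(t, z) \leq u_0(z) + \delta(t)$ with $\delta(t) \to 0$ as $t \to 0$, exploiting that each $u(t, \cdot)$ is bounded plurisubharmonic and that $\mu$ does not charge pluripolar sets (since $\mu \leq (dd^c \varphi)^n$ with $\varphi$ bounded psh). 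Concretely, extract a subsequence $u(t_k, \cdot) \to w$ in $L^1_{\mathrm{loc}}(dV)$ (compactness of bounded psh); by the continuity of integration against Monge-Amp\`ere measures of bounded psh functions along such convergence (Bedford-Taylor / Xing, combined with Radon-Nikodym to pass from $(dd^c \varphi)^n$ to $\mu$), pass (c) to the limit to obtain $\int_U w \, d\mu \leq \int_U u_0 \, d\mu$ for every $U \Subset \Omega$, whence $w \leq u_0$ $\mu$-a.e.\ by outer regularity of $\mu$. Hartogs' lemma for psh functions then yields $u(t_k, z) \leq u_0(z) + \gamma$ for $\mu$-a.e.\ $z \in K$ and $k$ large, and a standard subsequential contradiction argument extends this to every $t$ sufficiently small. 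For $\gamma < \varepsilon/2$ this directly contradicts the strict lower bound $u > u_0 + \varepsilon/2$ on $E_\varepsilon$ established above, forcing $\mu(E_\varepsilon(t)) = 0$ for a.e.\ $t$ in some $(0, \tilde{\varepsilon})$ and hence $\int_{E_\varepsilon \cap ((0, \tilde{\varepsilon}) \times \Omega)} \, dt \wedge d\mu = 0$.
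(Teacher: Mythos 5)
Your opening moves are fine: (a) localizes $E_\varepsilon$ to a compact in space, and (b) together with (d) gives the strict lower bound $u(t,z)>u_0(z)+\varepsilon/2$ on $E_\varepsilon\cap((0,\tilde\varepsilon_0)\times\Omega)$ for $dt\wedge d\mu$-a.e.\ point. However, you correctly diagnose that this plus the integral bound (c) only yields boundedness of $\mu(E_\varepsilon(t))$, and the route you then propose to bridge the gap has genuine holes. First, from $u(t_k,\cdot)\to w$ in $L^1_{loc}(dV)$ you cannot immediately pass to the limit in $\int_U u(t_k,\cdot)\,d\mu$: $\mu$ may be singular with respect to $dV$, and $L^1_{loc}(dV)$ convergence of uniformly bounded psh functions does not automatically give convergence in $L^1(\mu)$ or even in capacity (the Bedford--Taylor and Xing convergence theorems you invoke concern monotone sequences or the Monge--Amp\`ere operator itself, not integration of a general $L^1_{loc}$-convergent sequence against a fixed measure). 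Second, and more seriously, even if you obtain $w\le u_0$ $\mu$-a.e., Hartogs' lemma does not then deliver $u(t_k,z)\le u_0(z)+\gamma$ for $\mu$-a.e.\ $z$ and large $k$: Hartogs compares the $u(t_k,\cdot)$ against a \emph{continuous} majorant of $w$ on compacts, whereas $u_0$ is only plurisubharmonic, and a $\mu$-a.e.\ inequality $w\le u_0$ gives no control of $u_0$ versus $w$ on the exceptional $\mu$-null set --- a set that Hartogs' lemma, being a statement about all points of the compact, cannot ignore. So the crucial eventual pointwise bound is not actually produced.

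Beyond the gap, your approach is also a genuine detour from the paper's, which needs no compactness, no Hartogs, and in fact no pointwise upgrade of (c) at all. The paper simply integrates the defining inequality $1<(u-v)/((t+1)\varepsilon)$ of $E$ over $E\cap((0,\delta)\times U)$ against $dt\wedge d\mu$, decomposes $u-v\le(u-u_0)+(v_0-v)$ using (d), controls the first piece with (c) and the second with (b), and arrives at the self-defeating estimate
\begin{align*}
\int_{E\cap((0,\delta)\times U)}dt\wedge d\mu \ \le\ \frac{A\delta+\sup_{(0,\delta)}|\eta|}{\varepsilon}\int_{E\cap((0,\delta)\times U)}dt\wedge d\mu,
\end{align*}
so that choosing $\delta=\tilde\varepsilon$ with $A\tilde\varepsilon\le\varepsilon/3$ and $\sup_{(0,\tilde\varepsilon)}|\eta|\le\varepsilon/3$ forces the left side to vanish. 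In other words, the contradiction is obtained at the level of the $dt\wedge d\mu$-measure directly, not by first converting (c) into a pointwise statement. If you want to keep your strategy, the missing ingredient is precisely such a bootstrap against $dt\wedge d\mu$ rather than a Hartogs-type upgrade, which as written cannot be justified.
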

\begin{proof}
    We fix $\varepsilon> 0$. Let us define
    \begin{align*}
        E := \{v+(t+1)\varepsilon < u\}. 
    \end{align*}
    Let us fix $0 < S < T$. Let $U \Subset \Omega$ be an open subset satisfying $E \cap ((0, S] \times \Omega) \subset [0, S] \times U$. Since
    \begin{align*}
        1 < \frac{u-v}{(t+1)\varepsilon} 
    \end{align*}
    in $E$, for all $0 < \delta < S$, we have
    \begin{equation}\label{lemma 3.7-1}
        \begin{aligned}
            &\int_{E \cap ((0, \delta) \times U)}dt \wedge d\mu \\&\quad\leq \int_{E \cap ((0, \delta) \times U)} \frac{u-v}{(t+1)\varepsilon}dt \wedge d\mu \\ &\quad\leq \int_{E \cap ((0, \delta) \times U)}\frac{u-u_0}{(t+1)\varepsilon} dt \wedge d\mu + \int_{E \cap ((0, \delta) \times U)}\frac{v_0-v}{(t+1)\varepsilon}dt \wedge d\mu \\
        &\quad\leq \frac{A}{\varepsilon}\int_{E \cap ((0, \delta)\times U)}\frac{t}{t+1} dt \wedge d\mu + \frac{1}{\varepsilon}\int_{E \cap ((0, \delta) \times U)}\frac{\eta(t)}{t+1}dt \wedge d\mu \\
        &\quad \leq \frac{A\delta+\sup_{t \in (0, \delta)}\lvert \eta(t)\rvert}{\varepsilon} \int_{E \cap ((0, \delta) \times U)}dt \wedge d\mu.
        \end{aligned}
    \end{equation}
    Here, we used the condition (d) for the second inequality and the condition (b) and (c) for the third inequality.
    Let us choose $\Tilde{\varepsilon}$ sufficiently small so that
    \begin{align}\label{lemma 3.7-2}
        A\Tilde{\varepsilon} \leq \frac{\varepsilon}{3} \text{ and } \sup_{t \in (0, \Tilde{\varepsilon})}\lvert \eta(t)\rvert \leq \frac{\varepsilon}{3}.
    \end{align}
    By combining (\ref{lemma 3.7-1}) and (\ref{lemma 3.7-2}), we get
    \begin{align*}
        0 \leq \int_{E \cap ((0, \Tilde{\varepsilon}) \times U)}dt \wedge d\mu \leq \frac{2}{3}\int_{E \cap ((0, \Tilde{\varepsilon})\times U)}dt \wedge d\mu,
    \end{align*}
    which implies that
    \begin{align*}
        \int_{E \cap ((0, \Tilde{\varepsilon}) \times \Omega)}dt \wedge d\mu = \int_{E \cap ((0, \Tilde{\varepsilon}) \times U)}dt \wedge d\mu = 0. 
    \end{align*}
    Thus we get the result.
    \end{proof}
\begin{remark}
    We do not need (\ref{MA}) for the Lemma \ref{Lemma 3.3} and \ref{Lemma 3.6}. They are indeed true for any positive Borel measure $\mu$ satisfying $PSH(\Omega) \subset L^1_{loc}(\Omega, d\mu)$.
\end{remark}
Finally, we are ready to prove our main result.
\begin{theorem}\label{theorem 3.8}
        Let $u, v \in \mathcal{P}(\Omega_T) \cap L^{\infty}(\Omega_T)$ with the Cauchy-Dirichlet boundary data $h_1$, $h_2$ as in (\ref{Cauchy-Dirichlet boundary condition}). Assume that
    \begin{enumerate}
        \item [(a)] $u$ and $v$ are locally uniformly semi-concave in $(0, T)$;
        \item [(b)] $dt \wedge (dd^cu)^n \geq e^{\partial_tu+F(t, z, u)}dt \wedge d\mu$ in $\Omega_T$;
        \item [(c)] $dt \wedge (dd^cv)^n \leq e^{\partial_tv+F(t, z, v)}dt \wedge d\mu$ in $\Omega_T$;
        \item [(d)] $h_1$ satisfies (\ref{additional assumption}).
    \end{enumerate}
    If $h_1 \leq h_2$, then $u \leq v$.
\end{theorem}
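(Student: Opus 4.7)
My plan is to verify the six hypotheses of Lemma \ref{Lemma 3.3} for $u$ and $v$, possibly after an auxiliary time-regularization of $u$, and then conclude $u \leq v$ directly. Conditions (a), (c), and (e) of Lemma \ref{Lemma 3.3} are immediate from the theorem: (a) follows from the Cauchy-Dirichlet boundary conditions together with $h_1 \leq h_2$; (c) is the theorem's hypothesis (a); and (e) is the theorem's hypothesis (c). For condition (d), I first invoke \cite[Proposition 3.2]{GLZ21-2} to promote the distributional inequalities (b) and (c) of the theorem to the fiberwise Monge-Amp\`ere inequalities $(dd^c u(t,\cdot))^n \geq e^{\partial_t u + F(t,\cdot, u)}d\mu$ and $(dd^c v(t,\cdot))^n \leq e^{\partial_t v + F(t,\cdot, v)}d\mu$ for a.e.~$t$, and then apply Dinew's mixed-type inequality, Lemma \ref{Lemma 3.1}, to the pair $u(t,\cdot), v(t,\cdot)$ fiberwise, rearranging exponentials to obtain the required mixed inequality.

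Condition (b) of Lemma \ref{Lemma 3.3} is delivered by Lemma \ref{Lemma 3.6}, whose hypotheses I verify one by one. Its hypothesis (a) coincides with (a) of Lemma \ref{Lemma 3.3}; its hypothesis (b) is exactly the conclusion of Lemma \ref{Lemma 3.7} applied to the supersolution $v$ with boundary data $h_2$; and its hypothesis (d), namely $h_1(0,\cdot) \leq h_2(0,\cdot)$, is immediate from $h_1 \leq h_2$. The remaining hypothesis (c) of Lemma \ref{Lemma 3.6}, the linear-in-$t$ growth $\int_U u(t,\cdot)\, d\mu \leq \int_U u_0\, d\mu + At$, follows from the two-sided local uniform Lipschitz bound in $t$ built into the definition of $\mathcal{P}(\Omega_T)$, extended down to $t = 0^+$ using the boundary control $t|\partial_t h_1| \leq C$ provided by hypothesis (d) of the theorem, which prevents logarithmic blow-up of the boundary trace as $t \to 0^+$.

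The main obstacle is hypothesis (f) of Lemma \ref{Lemma 3.3}: $\partial_t u$ must exist everywhere and be locally uniformly Lipschitz in $t$, while mere semi-concavity of $u$ in $t$ only produces one-sided derivatives that are monotone (not Lipschitz) after an affine correction. To bypass this I propose to regularize $u$ in time by a sup-convolution
\begin{equation*}
u^{\delta}(t, z) \,:=\, \sup_{s \in (0,T)} \left\{ u(s, z) - \frac{(t-s)^2}{2\delta} \right\},
\end{equation*}
for small $\delta > 0$ on a slightly shrunken time interval. Standard sup-convolution theory gives $u^{\delta} \in C^{1,1}$ in $t$ with $\partial_t u^{\delta}(t,z) = -(t-s^{\ast}(t,z))/\delta$ locally Lipschitz in $t$, $u^{\delta}(t,\cdot) \in \mathrm{PSH}(\Omega)$ for each $t$, and $u^{\delta} \downarrow u$ uniformly as $\delta \to 0$ thanks to the Lipschitz-in-$t$ bound on $u$.

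The technical core of the argument is to check that $u^{\delta}$ remains a subsolution up to a perturbation of $F$ that vanishes as $\delta \to 0$: the identity $\partial_t u^{\delta}(t,z) = \partial_s u(s^{\ast}, z)$ coming from first-order optimality, together with the monotonicity of $F$ in $r$ and its continuity in $(t,r)$, allows the fiberwise inequality for $u$ at time $s^{\ast}$ to be transferred to $u^{\delta}$ at time $t$ modulo a $o_{\delta}(1)$ error in the exponent. The boundary bound $t|\partial_t h_1|\leq C$ controls the lateral values of $u^{\delta}$, so that $u^{\delta}$ can be compared with $v + \eta_{\delta}$ by Lemma \ref{Lemma 3.3} (with $\eta_{\delta} \to 0$), yielding $u^{\delta} \leq v + \eta_{\delta}$; passing to the limit $\delta \to 0$ gives $u \leq v$.
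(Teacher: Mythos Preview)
Your overall strategy---reduce to Lemma~\ref{Lemma 3.3} after a time-regularization of $u$---matches the paper's, and your verification of conditions (a), (c), (e) of that lemma and of the hypotheses of Lemma~\ref{Lemma 3.6} is essentially correct (though your justification of hypothesis~(c) of Lemma~\ref{Lemma 3.6} is vague; the paper derives $\int_U u(t,\cdot)\,d\mu \le \int_U u_0\,d\mu + At$ from the subsolution inequality via Chern--Levine--Nirenberg and Jensen, not from Lipschitz regularity of $u$ in $t$, which is only \emph{local} in $(0,T)$ and gives no control at $t=0^+$).

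The genuine gap is in the regularization. You form the sup-convolution $u^{\delta}(t,z)=\sup_{s}\{u(s,z)-(t-s)^{2}/(2\delta)\}$ and assert that the fiberwise subsolution inequality for $u$ at the optimizer $s^{\ast}=s^{\ast}(t,z)$ transfers to $u^{\delta}$ at time $t$. That reasoning is viscosity-style: it works pointwise by touching with test functions. In the pluripotential setting it does not go through, because $s^{\ast}$ depends on $z$. For fixed $t$ the slice $u^{\delta}(t,\cdot)$ is the upper envelope of the family $\{u(s,\cdot)-c_{s}\}_{s}$, and there is no general mechanism by which an inequality $(dd^{c}\phi_{s})^{n}\ge f_{s}\,d\mu$ (or the mixed version with $(dd^c v)^{n-1}$) passes to a supremum $\sup_{s}\phi_{s}$. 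The envelope result that would justify such a step is precisely \cite{GLZ19}, which requires an $L^{p}$ density and is explicitly unavailable here (see the paragraph preceding Lemma~\ref{Lemma 3.1}). Without it you cannot verify condition~(d) of Lemma~\ref{Lemma 3.3} for $u^{\delta}$; the identity $\partial_t u^\delta(t,z)=\partial_s u(s^\ast,z)$ controls the exponent but says nothing about the measure on the left-hand side.

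The paper's regularization is chosen so that condition~(d) survives. One sets $W^{s}(t,z):=s^{-1}u(st,z)-C|s-1|(t+1)$ (a time \emph{rescaling}), checks via \cite[Theorem~4.2]{GLZ21-2} that each $W^{s}$ is still a pluripotential subsolution, and then averages \emph{linearly}:
\[
\Phi^{\varepsilon}(t,z)=\int_{\mathbb{R}}W^{s}(t,z)\,\eta_{\varepsilon}(s-1)\,ds.
\]
Because $\phi\mapsto dd^{c}\phi\wedge(dd^{c}v)^{n-1}$ is linear in $\phi$, the mixed inequality from Lemma~\ref{Lemma 3.1} for each $W^{s}$ integrates against $\eta_{\varepsilon}(s-1)\,ds$, and one application of Jensen handles the exponential, giving condition~(d) for $\Phi^{\varepsilon}$ up to an $O(\varepsilon)$ error. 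Condition~(f) then follows by writing $\partial_{t}u(st,z)=t^{-1}\partial_{s}u(st,z)$ and integrating by parts in $s$, so the derivative lands on $\eta_{\varepsilon}$. In short, your sup-convolution secures~(f) cheaply but loses~(d); the paper's linear mollification in the scaling parameter is designed so that both hold simultaneously.
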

\begin{proof}
    Fix $0 < S < T$. We first assume that
    \begin{align}\label{lipschitzness near 0}
        t\lvert \partial_tu(t,z)\rvert \leq B
    \end{align}
    for some $B > 0$ in $\Omega_S$. We will remove this assumption at the end of the proof. For $s > 0$ near $1$ we set, for $(t, z) \in \Omega_S$, 
    \begin{align*}
        W^s(t, z) := s^{-1}u(st, z) - C\lvert s-1\rvert(t+1).
    \end{align*}
    It follows from \cite[Theorem 4.2]{GLZ21-2} that for $C > 0$ large enough, we have 
    \begin{align}\label{property of W^s}
        \begin{cases}
            &dt \wedge (dd^cW^s)^n \geq e^{\partial_tW^{s}+F(t, z, W^s)}dt \wedge d\mu \text{ in } \Omega_S, \\
            &\limsup_{(t, z) \rightarrow (\tau, \zeta) \in [0, S) \times \partial \Omega}W^s(t, z) \leq h_1(\tau, \zeta),\\
            &\limsup_{t \rightarrow 0+}W^s(t, z) \leq h_1(0, z) \text{ for $\mu$-a.e. $z \in \Omega$.} 
        \end{cases}
    \end{align}
    Note that we need the condition (d) to find such a constant $C > 0$. Let $\eta_{\varepsilon}$ be the standard mollifier on $\mathbb{R}$. We set 
    \begin{align*}
        \Phi^{\varepsilon}(t, z) := \int_{\mathbb{R}}W^s(t, z)\eta_{\varepsilon}(s-1)ds.
    \end{align*}
    The idea is to apply the Lemma \ref{Lemma 3.3} for $\Phi^{\varepsilon} - O(\varepsilon)t$ and $v$ in $\Omega_S$, and derive the result. We show that they satisfy all the assumptions given in Lemma \ref{Lemma 3.3}. \\
    \indent One can check that $\partial_t\Phi^{\varepsilon}(t, z)$ is well-defined for all $(t, z) \in \Omega$ and $\Phi^{\varepsilon}$ satisfies
    \begin{align*}
        \begin{cases}
            &\limsup_{(t, z) \rightarrow (\tau, \zeta) \in [0, S) \times \partial \Omega}\Phi^{\varepsilon}(t, z) \leq h_1(\tau, \zeta),\\
            &\limsup_{t \rightarrow 0+}\Phi^{\varepsilon}(t, z) \leq h_1(0, z) \text{ for $\mu$-a.e. $z \in \Omega$.}
        \end{cases}
    \end{align*}
    We prove that $\partial_t\Phi^{\varepsilon}$ is locally uniformly Lipschitz in $(0, T)$. For all $(t, z) \in \Omega$, we have
\begin{align*}
    \partial_t\Phi^{\varepsilon}(t, z) &= \int_{\mathbb{R}}\partial_tW^s(t, z)\eta_{\varepsilon}(s-1)ds \\
    &= \int_{\mathbb{R}}(\partial_{\tau}u(st, z)-C\lvert s-1\rvert)\eta_{\varepsilon}(s-1)ds \\
    &= \int_{\mathbb{R}}t^{-1}\partial_su(st, z)\eta_{\varepsilon}(s-1)ds - C\int_{\mathbb{R}}\lvert s-1\rvert\eta_{\varepsilon}(s-1)ds \\
    &= -\int_{\mathbb{R}}t^{-1}u(st, z)\eta'_{\varepsilon}(s-1)ds - C\int_{\mathbb{R}}\lvert s-1\rvert \eta_{\varepsilon}(s-1)ds.
\end{align*}
Let $J \Subset (0, S)$ be a subinterval. Note that $supp(\eta_{\varepsilon}'(s-1)) \subset (1-\varepsilon_0, 1+\varepsilon_0)$ for some sufficiently small $0 < \varepsilon_0 < 1$. It follows from the local uniform Lipschitzness of $u$ on $(0, T)$ that there exists $\kappa > 0$ such that 
\begin{align}\label{kappa s}
    \lvert u(st_1, z) - u(st_2, z)\rvert \leq \kappa s\lvert t_1-t_2\rvert    
\end{align}
for all $s \in (1-\varepsilon_0, 1+\varepsilon_0)$ and $(t_1, z), (t_2, z) \in J \times \Omega$. Therefore for all $(t_1, z)$, $(t_2, z) \in J\times \Omega$,
\begin{align*}
    &\lvert \partial_t\Phi^{\varepsilon}(t_1, z) - \partial_t\Phi^{\varepsilon}(t_2, z)\rvert \\&= \left \vert \int_{\mathbb{R}}t_1^{-1}u(st_1, z)\eta_{\varepsilon}'(s-1)ds - \int_{\mathbb{R}}t_2^{-1}u(st_2, z)  \eta'_{\varepsilon}(s-1)ds \right\vert \\ 
    &\quad\leq \left\vert\int_{\mathbb{R}} (t_1^{-1}u(st_1, z) - t_1^{-1}u(st_2, z)) \eta'_{\varepsilon}(s-1) ds \right\vert\\ &\quad\quad+ \int_{\mathbb{R}}\lvert t_1^{-1} - t_2^{-1}\rvert\lvert u(st_2, z)\rvert \lvert \eta'_{\varepsilon}(s-1)\rvert ds \\
    &\quad\leq t_1^{-1} 2\kappa \lvert t_1-t_2\rvert \int_{\mathbb{R}}\lvert \eta'_{\varepsilon}(s-1)\rvert ds + \frac{\lvert t_1-t_2\rvert}{t_1t_2}\lVert u\rVert_{L^{\infty}(\Omega_T)}\int_{\mathbb{R}}\lvert \eta'_{\varepsilon}(s-1)\rvert ds.
\end{align*}
Here we used (\ref{kappa s}) for the last inequality. This implies that for each $\varepsilon > 0$, $\partial_t\Phi^{\varepsilon}$ is locally uniformly Lipschitz in $(0, S)$. One can also check that $\Phi^{\varepsilon}$ is locally uniformly semi-concave in $(0, S)$.\\    
    \indent Since $W^s$ satisfies (\ref{property of W^s}), we have
    \begin{align*}
        (dd^cW^s(t, \cdot))^n &\geq e^{\partial_tW^s(t, \cdot)+F(t, \cdot, W^s)}d\mu \\
        &\geq e^{\partial_tW^s(t, \cdot)-\partial_tv(t, \cdot)+F(t, \cdot, W^s)-F(t, \cdot, v)}(dd^cv(t, \cdot))^n
    \end{align*}
    for a.e. $t \in (0, S)$. Fix such a $t$.
    It follows from Lemma \ref{Lemma 3.1} that
    \begin{equation}\label{3.10}
        \begin{aligned}
            &(dd^cW^s(t, \cdot)) \wedge (dd^cv(t, \cdot))^{n-1}\\ &\quad\geq \exp \left(\frac{\partial_tW^s(t, \cdot)-\partial_tv(t, \cdot)+F(t, \cdot ,W^s)-F(t, \cdot, v)}{n}\right)(dd^cv(t, \cdot))^n.
        \end{aligned}
    \end{equation}
    We show that
    \begin{equation}\label{3.11}
        \begin{aligned}
            &(dd^c\Phi^{\varepsilon}(t, \cdot)) \wedge (dd^cv(t, \cdot))^{n-1}\\ &\geq \exp\left(\frac{\partial_t\Phi^{\varepsilon}(t, \cdot)-\partial_tv(t, \cdot)+F(t, \cdot, \Phi^{\varepsilon})-F(t, \cdot, v)-O(\varepsilon)}{n}\right) (dd^cv(t, \cdot))^n.
        \end{aligned}
    \end{equation}
    Indeed, let $\chi(z)$ be a positive smooth test function on $\Omega$. Then we have
    \begin{align*}
        &\int_{\Omega}\chi(z) (dd^c\Phi^{\varepsilon}(t, \cdot)) \wedge (dd^cv(t, \cdot))^{n-1}\\ &\quad= \int_{\Omega}\Phi^{\varepsilon}(t, \cdot)(dd^c\chi) \wedge (dd^cv(t, \cdot))^{n-1} \\&\quad=\int_{\Omega}\left(\int_{\mathbb{R}}W^s(t, \cdot)\eta_{\varepsilon}(s-1)ds\right)(dd^c\chi)\wedge (dd^cv(t, \cdot))^{n-1}. 
    \end{align*}
    Let us denote
    \begin{align*}
        I(s, z) = \partial_tW^s(t, z)-\partial_tv(t, z)+F(t, z, W^s)-F(t, z, v).
    \end{align*}
    By the Fubini's theorem, the last term satisfies
    \begin{align*}
        &\int_{\mathbb{R}}\eta_{\varepsilon}(s-1)\left(\int_{\Omega}W^s(t, \cdot)(dd^c\chi)\wedge (dd^cv(t, \cdot))^{n-1}\right)ds \\
        &\quad = \int_{\mathbb{R}}\eta_{\varepsilon}(s-1)\left(\int_{\Omega}\chi(z) (dd^cW^s(t, \cdot)) \wedge (dd^cv(t, \cdot))^{n-1}\right)ds\\
        &\quad \quad \geq \int_{\mathbb{R}}\eta_{\varepsilon}(s-1)\left(\int_{\Omega}\chi(z) \exp\left(\frac{I(s, z)}{n}\right)(dd^cv(t, \cdot))^n\right) ds.
    \end{align*}
    For the inequality, we used (\ref{3.10}). By the Fubini's theorem and Jensen's inequality, the last term satisfies
    \begin{equation}\label{3.12}
        \begin{aligned}
            &\int_{\Omega}\chi(z) \left(\int_{\mathbb{R}}\eta_{\varepsilon}(s-1)\exp\left(\frac{I(s, z)}{n}\right)ds \right) (dd^cv(t, \cdot))^n \\
        &\quad \geq \int_{\Omega}\chi(z) \exp\left(\frac{1}{n}\int_{\mathbb{R}}I(s, z)\eta_{\varepsilon}(s-1)ds\right) (dd^cv(t, \cdot))^n,
        \end{aligned}
    \end{equation}
    By following the last part of the proof of \cite[Theorem 6.6]{GLZ21-2} and using the assumption (\ref{lipschitzness near 0}), one can show that
    \begin{align*}
        \int_{\mathbb{R}}F(t, \cdot, W^s)\eta_{\varepsilon}(s-1)ds \geq F(t, \cdot, \Phi^{\varepsilon})-O(\varepsilon),
    \end{align*}
    which implies that
    \begin{align*}
        &\int_{\mathbb{R}}I(s, z)\eta_{\varepsilon}(s-1)ds \\& \quad= \int_{\mathbb{R}}\partial_t(W^s(t, z)-v(t ,z))\eta_{\varepsilon}(s-1)ds + \int_{\mathbb{R}}(F(t, z, W^s)-F(t, z, v))\eta_{\varepsilon}(s-1)ds \\
        &\quad \quad \geq \partial_t(\Phi^{\varepsilon}(t, z) - v(t, z)) + F(t, z, \Phi^{\varepsilon})-F(t, z, v)-O(\varepsilon).
    \end{align*}
    Therefore the last term of (\ref{3.12}) is larger than
    \begin{align*}
        \int_{\Omega}\chi(z) \exp\left(\frac{\partial_t(\Phi^{\varepsilon}(t, z)-v(t, z))+F(t, z, \Phi^{\varepsilon})-F(t, z, v)-O(\varepsilon)}{n}\right) (dd^cv(t, \cdot))^n.
    \end{align*}
    Since $\chi$ is chosen arbitrarily, we get (\ref{3.11}). \\
    \indent It remains to show the assumption (b) in Lemma \ref{Lemma 3.3}. To show this, we prove that 
    \begin{enumerate}
        \item [(i)] there exists a function $\eta(t) \in C([0, S))$ such that $\eta(0) = 0$ and
        \begin{align*}
            v(t, z) \geq v_0(z) - \eta(t)
        \end{align*}
        for $dt \wedge d\mu$-a.e. $(t, z) \in \Omega_T$,
        \item [(ii)] for each open $U \Subset \Omega$ and $\varepsilon > 0$, there exists $A = A(\varepsilon) > 0$ such that
        \begin{align*}
            \int_U (\Phi^{\varepsilon}(t, \cdot) - O(\varepsilon)t)d\mu \leq \int_U h_1(0, \cdot)d\mu+At
        \end{align*}
    \end{enumerate}
    and apply Lemma \ref{Lemma 3.6}.
    Note that (i) follows from Lemma \ref{Lemma 3.7}. To prove (ii), it suffices to show that for each open $U \Subset \Omega$, there exists $A_0 > 0$ such that
    \begin{align}\label{3.13}
        \int_{U}W^s(t, \cdot)d\mu \leq \int_U h_1(0, \cdot)d\mu + A_0t.
    \end{align}
    If so, by integrating each side with respect to $\eta_{\varepsilon}(s-1)ds$, we get
    \begin{align*}
        \int_U \Phi^{\varepsilon}(t, \cdot)d\mu \leq \int_{\mathbb{R}}\left(\int_U h_1(0, \cdot)d\mu + A_0t\right)\eta_{\varepsilon}(s-1)ds = \int_U h_1(0, \cdot)d\mu + A_0t,
    \end{align*}
    which implies (ii).
    The proof of (\ref{3.13}) is almost same with the proof of \cite[Lemma 3.10]{GLZ21-2}. Indeed, fix an open subset $U \Subset \Omega$ and let $\chi(z)$ be a positive test function on $\Omega$. Recall that $W^s$ satisfies
    \begin{align*}
        dt \wedge (dd^cW^s)^n \geq e^{\partial_tW^s+F(t, z, W^s)}dt \wedge d\mu.
    \end{align*}
    It follows from the elliptic Chern-Levine-Nirenberg inequality that there exists a constant $C > 0$ satisfying
    \begin{align*}
        \int_{U}\chi(z) e^{\partial_tW^s(t, \cdot)+F(t, \cdot, W^s)}d\mu
        \leq \int_{U}\chi(z) (dd^cW^{s}(t, \cdot))^n 
        \leq C
    \end{align*}
    for a.e. $t \in (0, T)$. By applying Jensen's inequality, there exists $A_0 > 0$ satisfying
    \begin{align*}
        \int_U \chi(z) \partial_tW^{s}(t, \cdot)d\mu \leq A_0,
    \end{align*}
    which implies that
    \begin{align*}
        \int_U W^s(t, \cdot)d\mu \leq \int_U h_1(0, \cdot)d\mu + A_0t.
    \end{align*}
    Hence it follows from Lemma \ref{Lemma 3.3} and Lemma \ref{Lemma 3.6} that $\Phi^{\varepsilon}-O(\varepsilon)t \leq v$ in $\Omega_S$. Since $\Phi^{\varepsilon} \rightarrow u$ pointwisely as $\varepsilon \rightarrow 0$, we get the result. \\
    \indent Finally, we remove our assumption (\ref{lipschitzness near 0}). Let $\kappa_F > 0$ be a constant satisfying
    \begin{align*}
        \lvert F(t_1, z, u) - F(t_2, z, u)\rvert \leq \kappa_F\lvert t_1-t_2\rvert
    \end{align*}
    for any $t_1, t_2 \in [0, S]$. Fix $0 < \varepsilon_0 < S-T$ and let $U^{\varepsilon_0}(t, z) := u(t+\varepsilon_0, z) - \kappa_F\varepsilon_0 t$. Then $U^{\varepsilon_0}$ satisfies
    \begin{align*}
        dt \wedge (dd^cU^{\varepsilon_0}(t, z))^n &= dt \wedge (dd^cu(t+\varepsilon_0, z))^n \\ &\quad \geq e^{\partial_tu(t+\varepsilon_0, z)+F(t+\varepsilon_0, z, u(t+\varepsilon_0, z))}dt \wedge d\mu \\
        &\quad \geq e^{\partial_tU^{\varepsilon_0}+\kappa_F\varepsilon_0 + F(t+\varepsilon_0, z, U^{\varepsilon_0})}dt \wedge d\mu \\
        &\quad \geq e^{\partial_tU^{\varepsilon_0}+F(t, z, U^{\varepsilon_0})}dt \wedge d\mu.
    \end{align*}
    For the second inequality, we used the assumption that $F$ is increasing in the third variable. Moreover, $U^{\varepsilon_0}$ is locally uniformly semi-concave in $(0, S)$ with the Cauchy-Dirichlet boundary data $h_1^{\varepsilon_0} := h_1-\kappa_F\varepsilon_0t$. Hence one may replace $u$ by $U^{\varepsilon_0}$ to conclude that $U^{\varepsilon_0} \leq v$ in $\Omega_S$. Since $U^{\varepsilon_0} \rightarrow u$ pointwisely as $\varepsilon_0 \rightarrow 0$ we get the result.
    \end{proof}
From this theorem, we get the following uniqueness result.
\begin{corollary}\label{Corollary 3.9}
    Let $u_0 \in PSH(\Omega) \cap L^{\infty}(\Omega)$ and $h \in C([0, T) \times \partial \Omega)$ satisfy
    \begin{itemize}
        \item $\lim_{z \rightarrow \partial\Omega}u_0(z) = h(0, \cdot)$,
        \item $h$ satisfies (\ref{additional assumption}) and (\ref{additional assumption 2}).
    \end{itemize}
    Then there exists a unique $u \in \mathcal{P}(\Omega_T) \cap L^{\infty}(\Omega_T)$ which is locally uniformly semi-concave in $(0, T)$ and satisfy
    \begin{align*}
        \begin{cases}
            &dt \wedge (dd^cu)^n = e^{\partial_tu+F(t, z, u)}dt \wedge d\mu \text{ in } \Omega_T, \\
            &\lim_{(t, z) \rightarrow (\tau, \zeta)}u(t, z) = h(\tau, \zeta) \text{ for all } (\tau, \zeta) \in [0, T)\times \partial \Omega, \\
            &\lim_{t \rightarrow 0+}u(t, \cdot) = h(0, \cdot) \text{ in } L^1(d\mu).
        \end{cases}
    \end{align*}
\end{corollary}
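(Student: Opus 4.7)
The plan is to split the statement into existence and uniqueness, handling them by completely different routes. For existence, I would simply cite the author's previous paper \cite{Ka25}, where the Cauchy-Dirichlet problem is solved under exactly the hypotheses (\ref{additional assumption}), (\ref{additional assumption 2}), and (\ref{MA}) used here, producing a locally uniformly semi-concave solution in $\mathcal{P}(\Omega_T) \cap L^{\infty}(\Omega_T)$ matching the prescribed initial and boundary data.

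The uniqueness will be a direct consequence of Theorem \ref{theorem 3.8} applied twice, via the standard symmetric-comparison trick. I would suppose that $u_1, u_2 \in \mathcal{P}(\Omega_T) \cap L^{\infty}(\Omega_T)$ are two solutions that are locally uniformly semi-concave in $(0,T)$ with the same Cauchy-Dirichlet boundary data $h$. Each of them is then simultaneously a subsolution and a supersolution of the flow, so taking $u := u_1$ and $v := u_2$ in Theorem \ref{theorem 3.8}, hypothesis (a) holds by assumption, (b) and (c) hold as equalities, and (d) is satisfied because $h_1 = h$ satisfies (\ref{additional assumption}). Since trivially $h \leq h$, the theorem yields $u_1 \leq u_2$; swapping the roles of $u_1$ and $u_2$ and applying the theorem a second time (note that $h_2 = h$ also satisfies (\ref{additional assumption}), so the asymmetric hypothesis is available either way) gives $u_2 \leq u_1$, hence $u_1 = u_2$.

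One point worth highlighting is that the uniqueness assertion uses only (\ref{additional assumption}) out of the two differential conditions on $h$; the stronger second-derivative bound (\ref{additional assumption 2}) is imported purely through the existence half furnished by \cite{Ka25}. There is consequently no genuine obstacle at the corollary level: the entire analytical burden has already been absorbed into the proof of Theorem \ref{theorem 3.8}, and what remains is the purely formal observation that a comparison principle in which subsolution and supersolution play symmetric roles (modulo the one-sided assumption on $h_1$) automatically forces equality of any two solutions sharing the same boundary data.
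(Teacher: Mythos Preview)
Your proposal is correct and matches the paper's own proof essentially verbatim: existence is imported from \cite{Ka25} and uniqueness is read off from Theorem~\ref{theorem 3.8}. The paper compresses the symmetric-comparison step into a single sentence, but your spelled-out version (apply the theorem with $(u,v)=(u_1,u_2)$ and then with $(u,v)=(u_2,u_1)$) is exactly what is meant, and your remark that only (\ref{additional assumption}) is needed for uniqueness is also accurate.
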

\begin{proof}
    The existence of the solution comes from \cite[Theorem 4.5]{Ka25}. By Theorem \ref{theorem 3.8}, we get the uniqueness of such a solution.
\end{proof}
\begin{remark}\label{infinite time case}
    Even though \cite[Theorem 4.5]{Ka25} only considered the case when $T < +\infty$, Corollary \ref{Corollary 3.9} covers the case when $T = +\infty$. Indeed, due to the uniqueness, one can obtain the solution by gluing each solutions defined on $\Omega_{T'}$ for each $T' < T$.
\end{remark}
Next theorem is another application of Theorem \ref{theorem 3.8}, which explains the long-term behavior of the solution. The motivation of the proof comes from \cite[Theorem 5.3]{GLZ20}.
\begin{theorem}
    Let us denote $T = +\infty$. Let $u \in \mathcal{P}(\Omega_T) \cap L^{\infty}(\Omega_T)$ be the unique function which is locally uniformly semi-concave in $(0, T)$ and satisfy
    \begin{align*}
        \begin{cases}
            & dt \wedge (dd^cu)^n = e^{\partial_tu+F(z, u)}dt \wedge d\mu \text{ in } \Omega_T, \\
            &\lim_{(t, z) \rightarrow (\tau, \zeta)}u(t, z) = 0 \text{ for all } (\tau, \zeta) \in [0, T) \times \partial \Omega, \\
            &\lim_{t \rightarrow 0+}u(t, \cdot) = \psi(\cdot) \text{ in } L^1(d\mu),
        \end{cases}
    \end{align*}
    where $\psi$ is given in (\ref{MA2}). If there exists a constant $L_F > 0$ such that 
    \begin{align}\label{L F}
        \lvert F(z, r_1) - F(z, r_2)\rvert \geq L_F\lvert r_1-r_2\rvert
    \end{align}
    for all $z \in \Omega$ and $r_1, r_2 \in J$, then $u(t, \cdot) \rightarrow \psi_{\infty}$ uniformly as $t \rightarrow +\infty$ for $\psi_{\infty} \in PSH(\Omega) \cap L^{\infty}(\Omega)$ satisfying
    \begin{align}\label{3.15}
        \begin{cases}
            &(dd^c\psi_{\infty})^n = e^{F(z, \psi_{\infty})}d\mu \text{ in } \Omega,\\
            &\lim_{z \rightarrow \partial \Omega}\psi_{\infty}(z) = 0.
        \end{cases}
    \end{align}
\end{theorem}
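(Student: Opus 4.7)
The plan is to sandwich $u(t,\cdot)$ between two explicit time-dependent barriers centered at $\psi_\infty$, exploiting the strong monotonicity hypothesis (\ref{L F}) to obtain exponential contraction in $t$, and then to invoke Theorem \ref{theorem 3.8} twice to upgrade these into a genuine comparison on all of $\Omega_\infty$. Setting $C := \|\psi - \psi_\infty\|_{L^\infty(\Omega)}$, I would define
\begin{align*}
    v^{\pm}(t,z) := \psi_\infty(z) \pm C e^{-L_F t}, \qquad (t,z) \in \Omega_\infty.
\end{align*}
Both functions lie in $\mathcal{P}(\Omega_\infty) \cap L^\infty(\Omega_\infty)$, are smooth in $t$ (hence locally uniformly semi-concave on $(0,\infty)$ after absorbing the bounded $\partial_{tt}$ into a quadratic correction), and their boundary traces $\pm C e^{-L_F t}$ on $[0,\infty) \times \partial\Omega$ satisfy (\ref{additional assumption}), since $t\,|\partial_t(\pm C e^{-L_F t})| = C L_F t\, e^{-L_F t}$ is uniformly bounded on $[0,\infty)$.

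The key computation is that $v^+$ is a supersolution and $v^-$ a subsolution of the flow. Since $(dd^c v^{\pm})^n = (dd^c \psi_\infty)^n = e^{F(z,\psi_\infty)} d\mu$, the supersolution inequality for $v^+$ reduces, after cancelling $e^{F(z,\psi_\infty)}$, to
\begin{align*}
    F(z, \psi_\infty + C e^{-L_F t}) - F(z, \psi_\infty) \geq C L_F e^{-L_F t},
\end{align*}
which is exactly what (\ref{L F}) combined with the monotonicity of $F$ yields; the argument for $v^-$ is symmetric. By the choice of $C$, the initial trace of $v^+$ dominates that of $u$ on $\Omega$, and $C e^{-L_F t} \geq 0$ matches the vanishing boundary data of $u$ on $[0,\infty) \times \partial\Omega$. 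Theorem \ref{theorem 3.8}, applied on each finite slab $\Omega_{T'}$ and then letting $T' \to \infty$ as in Remark \ref{infinite time case}, therefore gives $u \leq v^+$ on $\Omega_\infty$; the symmetric argument gives $v^- \leq u$. Combining, $|u(t,z) - \psi_\infty(z)| \leq C e^{-L_F t}$ for all $(t,z) \in \Omega_\infty$, so $u(t,\cdot) \to \psi_\infty$ uniformly (in fact exponentially) as $t \to \infty$.

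The main obstacle, which I would address before the barrier argument above, is to produce $\psi_\infty$ satisfying (\ref{3.15}) in the first place. One natural route is a monotone Perron-type iteration: for any bounded $\phi \in PSH(\Omega)$, Kołodziej's subsolution theorem (whose hypothesis is supplied by (\ref{MA}), since $e^{F(z,\phi)}d\mu$ is dominated by a constant multiple of $(dd^c\varphi)^n$) produces a bounded plurisubharmonic $T\phi$ with zero boundary values solving $(dd^c T\phi)^n = e^{F(z,\phi)} d\mu$; iterating $T$ from suitably chosen sub- and super-barriers, where (\ref{L F}) prevents $F(z,\cdot)$ from staying bounded at $\pm\infty$ and hence delivers a priori $L^\infty$ estimates on the iterates, should give convergence in $L^\infty$ to a fixed point $\psi_\infty$. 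The hypothesis (\ref{L F}) is thus used twice in the full proof: once as the a priori bound that forces the elliptic fixed-point problem to admit a bounded solution, and once to produce the decay rate $e^{-L_F t}$ in the parabolic barrier step.
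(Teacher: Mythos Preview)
Your barrier argument is correct and in fact more streamlined than the paper's. The paper also sandwiches $u$ between explicit sub- and supersolutions and invokes Theorem~\ref{theorem 3.8}, but its barriers are more elaborate: the subsolution is $W(t,z)=e^{-L_F t}\psi(z)+(1-e^{-\kappa_F t})\psi_\infty(z)+\Phi(t)e^{-L_F t}$, with an auxiliary function $\Phi(t)=\tfrac{n}{L_F}(e^{L_F t}-1)\log(e^{L_F t}-1)-nt\,e^{L_F t}$ engineered so that $W(0,\cdot)=\psi$ exactly, and the supersolution is $V(t,z)=(1-e^{-ct})\psi_\infty(z)+e^{-ct}$ with $c=n/(1+\|\psi_\infty\|_{L^\infty})$, which relies only on the monotonicity of $F$ rather than on \eqref{L F}. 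Your choice $v^\pm=\psi_\infty\pm Ce^{-L_F t}$ trades the exact matching of initial data for the weaker inequality $\psi_\infty-C\le\psi\le\psi_\infty+C$, which is all Theorem~\ref{theorem 3.8} requires; in return you avoid the auxiliary function $\Phi$ and the separate Lipschitz constant $\kappa_F$, and you obtain the symmetric rate $e^{-L_F t}$ on both sides. For the existence of $\psi_\infty$, the paper simply cites Ko\l odziej's theorem \cite[Theorem~1.1]{Ko00} rather than running an iteration; your Perron sketch is plausible but would need the contraction or monotonicity of $T$ spelled out, so citing the known result is the safer route there.
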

\begin{remark}
    It follows from \cite[Section 5]{Ka25} that $u$ actually satisfies $\lim_{t \rightarrow 0+}u(t, z) = \psi(z)$ for all $z \in \Omega$, since $(\mu, \psi)$ is admissible (see \cite[Definition 5.2]{Ka25}). 
\end{remark}
\begin{proof}
    It follows from \cite[Theorem 1.1]{Ko00} that there exists $\psi_{\infty}$ satisfying (\ref{3.15}). It remains to show that $u(t, \cdot) \rightarrow \psi_{\infty}$ as $t \rightarrow +\infty$. \\
    \indent Let us define
    \begin{align*}
        \Phi(t) := \frac{n}{L_F}(e^{L_Ft}-1)\log(e^{L_Ft}-1)-nte^{L_Ft}.
    \end{align*}
    One can check that $\Phi$ satisfies
    \begin{enumerate}
        \item [(i)] $\Phi(t) \leq 0$ for all $t \in [0, T)$,
        \item [(ii)] $\Phi'(t) = ne^{L_Ft}\log(1-e^{-L_Ft})$,
        \item [(iii)] $\Phi(t)e^{-L_Ft}$ is bounded and $\Phi(t)e^{-L_Ft} \rightarrow 0$ as $t \rightarrow \infty$.
    \end{enumerate}
    Next, by the local uniform Lipschitzness of $F$, there exists $\kappa_F > 0$ such that
    \begin{align}\label{kappa F}
        \lvert F(z, r_1) - F(z, r_2) \rvert \leq \kappa_F \lvert r_1-r_2\rvert 
    \end{align}
    for all $r_1, r_2 \in (-M, M)$ where
    \begin{align*}
        M := \lVert \psi\rVert_{L^{\infty}(\Omega)}+\lVert \psi_{\infty}\rVert_{L^{\infty}(\Omega)} + \sup_{t \geq 0}\lvert \Phi(t)e^{-L_Ft}\rvert.
    \end{align*}
    Let us define
    \begin{align*}
        W(t, z) : = e^{-L_Ft}\psi(z) + (1-e^{-\kappa_Ft})\psi_{\infty}(z) + \Phi(t)e^{-L_Ft}.
    \end{align*}
    Since $\psi, \psi_{\infty} \leq 0$ on $\Omega$ and $\Phi \leq 0$ on $[0, T)$,
    \begin{align*}
        \partial_tW+F(z, W) &= -L_Fe^{-L_Ft}(\psi+\Phi)+ \kappa_Fe^{-\kappa_Ft}\psi_{\infty}+ \Phi'e^{-L_Ft}+F(z, W) \\
        &\quad \leq F(z, \psi_{\infty}+e^{-L_Ft}(\psi+\Phi))-L_Fe^{-L_Ft}(\psi+\Phi)+\Phi'e^{-L_Ft} \\
        &\quad \leq F(z, \psi_{\infty})+n\log(1-e^{-L_Ft}).
    \end{align*}
    Here we used (\ref{kappa F}) for the first inequality and (\ref{L F}) for the second inequality.
    Therefore $W \in \mathcal{P}(\Omega_T) \cap L^{\infty}(\Omega_T)$ satisfies
    \begin{align*}
        dt \wedge (dd^cW)^n &\geq (1-e^{-\kappa_Ft})^ndt \wedge (dd^c\psi_{\infty})^n \\
        &\geq e^{n\log(1-e^{-L_Ft})+F(z, \psi_{\infty})}dt \wedge d\mu \\
        &\geq e^{\partial_tW+F(z, W)}dt \wedge d\mu.
    \end{align*}
    Moreover, $W$ is locally uniformly semi-concave in $(0, T)$ and satisfies
    \begin{align*}
        \begin{cases}
            &\lim_{(t, z) \rightarrow (\tau, \zeta)}W(t, z) = \Phi(\tau)e^{-L_F\tau} \leq 0 \text{ for all } (\tau, \zeta) \in [0, T) \times \partial \Omega, \\
            &\lim_{t \rightarrow 0+}W(t, z) = \psi(z) \text{ for all } z\in \Omega.
        \end{cases}
    \end{align*}
    Hence it follows from Theorem \ref{theorem 3.8} that
    \begin{align*}
        W(t, z) \leq u(t, z),
    \end{align*}
    which implies that
    \begin{align}\label{3.17}
        \psi_{\infty}(z) - u(t, z) \leq e^{-\kappa_Ft}\psi_{\infty}(z)-e^{-L_Ft}(\psi(z)+\Phi(t)).
    \end{align}
    \indent Next, let us define
    \begin{align*}
        V(t, z) := (1-e^{-ct})\psi_{\infty}+e^{-ct}
    \end{align*}
    where 
    \begin{align*}
        c := \frac{n}{1+\lVert \psi_{\infty}\rVert_{L^{\infty}(\Omega)}} > 0.
    \end{align*}
    Note that $V \in \mathcal{P}(\Omega_T) \cap L^{\infty}(\Omega_T)$ is locally uniformly semi-concave in $(0, T)$ and satisfies
    \begin{align*}
        \begin{cases}
            &\lim_{(t, z) \rightarrow (\tau, \zeta)}V(t, z) = e^{-c\tau} > 0 \text{ for all } (\tau, \zeta) \in [0, T) \times \partial \Omega, \\
            &\lim_{t \rightarrow 0+}V(t, z) = 1 > \psi(z) \text{ for all } z \in \Omega.
        \end{cases}
    \end{align*}
    Moreover, since $F = F(z ,r)$ is increasing in $r$, 
    \begin{align*}
        \partial_tV+F(z, V) 
        & \geq ce^{-ct}(\psi_{\infty}-1) + F(z, \psi_{\infty}) \\
        &\geq -ne^{-ct}+F(z, \psi_{\infty}) \\
        &\geq n\log(1-e^{-ct})+F(z, \psi_{\infty}).
    \end{align*}
    Here we used the inequality that $V \geq \psi_{\infty}$ for the first inequality. Therefore $V$ satisfies
    \begin{align*}
        dt \wedge (dd^cV)^n &= (1-e^{-ct})^n dt \wedge (dd^c\psi_{\infty})^n \\
        &= e^{n\log(1-e^{-ct})+F(z, \psi_{\infty})}dt \wedge d\mu \\
        &\quad \leq e^{\partial_tV+F(z, V)}dt \wedge d\mu.
    \end{align*}
    It follows from Theorem \ref{theorem 3.8} that
    \begin{align*}
        V(t, z) \geq u(t, z),
    \end{align*}
    which implies that
    \begin{align}\label{3.18}
        u(t, z) - \psi_{\infty}(z) \leq e^{-ct}(1-\psi_{\infty}).
    \end{align}
    \indent Finally, by (\ref{3.17}) and (\ref{3.18}), $u(t, \cdot) \rightarrow \psi_{\infty}$ uniformly as $t \rightarrow \infty$.
\end{proof}
\bibliographystyle{abbrv}
\bibliography{ref.bib}
\end{document}